\newcommand\citet{\cite}
\numberwithin{equation}{section}
\theoremstyle{plain}
\newtheorem{theorem}{Theorem}[section]
\newtheorem{corollary}[theorem]{Corollary}
\newtheorem{proposition}[theorem]{Proposition}
\newtheorem{lemma}[theorem]{Lemma}
\theoremstyle{definition}
\newtheorem{definition}[theorem]{Definition}
\newtheorem{example}[theorem]{Example}
\renewcommand\thmcontinues[1]{Continued}
\theoremstyle{remark}
\renewcommand\ge{\ensuremath{\geqslant}}
\renewcommand\le{\ensuremath{\leqslant}}
\newcommand\G{Gr\"{o}bner}
\newcommand\LT{\ensuremath{\mathrm{LT}}}
\newcommand\R{\ensuremath{\mathbb{R}}}
\newcommand\poly{\ensuremath{\R[x^{\vphantom{2}}_1, \dots, x^{\vphantom{2}}_r]}}
\newcommand\la{\ensuremath{\left<}}
\newcommand\ra{\ensuremath{\right>}}
\begin{document}
\title[Identifiability of Polynomial Models]{Identifiability of Polynomial Models from First Principles and via a Gr\"obner Basis Approach}

\author{J.D.~Godolphin}
\address{Department of Mathematics\\ University of Surrey\\ Guildford\\ Surrey\\ GU2 7XH\\ U.K.}
\email{j.godolphin@surrey.ac.uk}

\author{James\ D.E.~Grant}
\address{Department of Mathematics\\ University of Surrey\\ Guildford\\ Surrey\\ GU2 7XH\\ U.K.}
\email{j.grant@surrey.ac.uk}

\date{\today}

\begin{abstract}
The relationship between a set of design points and the class of hierarchical polynomial models identifiable from the design is investigated. Saturated models are of particular interest. Necessary and sufficient conditions are derived on the set of design points for specific terms to be included in leaves of the statistical fan.
A practitioner led approach to building hierarchical saturated models that are identifiable is developed.
This approach is compared to the method of model building based on Gr\"obner
bases. The main results are illustrated by examples.
\end{abstract}
\subjclass[2020]{62K99, 62B15, 62R01, 05B30}
\keywords{Commutative algebra; Design matrix; Design of experiments; Gr\"obner basis; Hierarchical model; Identifiability; Polynomial model; Saturated model.}
\thanks{The research of the second author was partially supported by grant~\href{https://www.fwf.ac.at/forschungsradar/10.55776/PAT1996423}{PAT 1996423} of the~\href{https://www.fwf.ac.at/}{Austrian Science Fund (FWF)}}
\maketitle
\thispagestyle{empty}

\section{Introduction}\label{sect1}

In an experimental situation there are typically 
a number of controllable factors, \(r\) say, that may affect a response of interest. The relationship between the \(r\) factors and the response 
variable is often unknown and the aim is to determine a function of the factors that adequately 
models this relationship. Once established, the model can be used to determine the values or levels of each factor at which the response is optimised. Applications are widespread across fields including: manufacturing; chemical engineering; pharmaceuticals and biotechnology; agriculture. In some experiments, especially in manufacturing, the number of controllable factors can be quite large, sometimes with \(r > 20\). Key references on this topic include~\citet{wu2021experiments}, \citet{myers2016response} and~\citet{pukelsheim2006optimal}. 

In practice, the  controllable factors are transformed to \(r\) 
\emph{indeterminates}, \(x^{\vphantom{2}}_1, \ldots, x^{\vphantom{2}}_r\). For instance, the 
transformation is typically chosen so that the indeterminate values have mean zero and similar spreads. As an elementary example, consider a chemical process with yield depending on reaction time, temperature and reactant concentration. If 
two reaction times (5 minutes and 12 minutes), two temperatures (60\(^\circ\)C and 80\(^\circ\)C) and three reactant concentrations (0.25mol/dm\(^3\), 0.50mol/dm\(^3\), 0.75mol/dm\(^3\)) are 
employed, these would typically be converted to 
\(x^{\vphantom{2}}_1, x^{\vphantom{2}}_2, x^{\vphantom{2}}_3\), with \(x^{\vphantom{2}}_1\) and \(x^{\vphantom{2}}_2\) each taking 
values \(\pm 1\), and \(x^{\vphantom{2}}_3\) taking values \(0, \pm 1\). 

In this work, we consider relationships between the indeterminates and the response \(Y\) of the form:
\begin{equation}\label{intmodel}
Y = f(x^{\vphantom{2}}_1,x^{\vphantom{2}}_2,\cdots,x^{\vphantom{2}}_r) + \varepsilon, 
\end{equation}
where \(f(x^{\vphantom{2}}_1,x^{\vphantom{2}}_2,\cdots, x^{\vphantom{2}}_r)\) is a polynomial and \(\varepsilon\) is the noise or error term with zero mean and constant variance.

We address two complementary issues: identifying which polynomial models can be estimated from a given design, and constructing designs that yield estimable models with specific terms.
Preliminary concepts are introduced in \S\ref{sp}. In \S\ref{HIS}, we establish results on the existence of estimable models that are both saturated and hierarchical for a given design, and present an elementary approach to 
building such models. We develop necessary and sufficient conditions on the design points for the inclusion of selected terms in estimable saturated hierarchical models in \S\ref{s3}. 

The latter part of this work investigates an existing approach to model identification using Gr\"obner bases. The theory of Gr\"obner bases was developed by~\citet{Buchberger} for solving problems involving polynomial ideals. Over the past 25 years, a substantial body of research has emerged on the use of Gr\"obner bases in statistics, with foundational
work 
in~\citet{DiaconisSturmfels}. The application of Gr\"obner bases to experimental design began with the seminal paper by \citet{PistoneWynn}. 
For a given design, Pistone and Wynn use a Gr\"obner
basis approach to identify saturated hierarchical polynomial models for which the parameters
have unbiased estimates. Following this work, a significant body of work has expanded on the approach, exploring the use of Gr\"obner bases for model identification in various experimental design contexts. For descriptions of  work at the interface between computational commutative algebra and the design of experiments see~\citet{pistone2009methods} and~\citet{riccomagno2009short}.
Focussed applications include mixture experiments in~\citet{maruri2007description} and~\citet{giglio2001grobner}, $m$-way designs in~\citet{Wynn}, and factorial experiments in~\citet{galetto2003confounding}, \cite{evangelaras2006comparison}, \citet{aoki2012design} and~\citet{fontana2014characterization}.  

In \S\ref{sgb}, we provide a  brief overview of the theory underlying Gr\"obner bases. 
We describe the
Gr\"obner basis approach to model identifiability.
The relationship between the approaches of \S\ref{HIS} and \S\ref{sgb} is clarified, and we discuss their relative merits.

Examples are given and
developed through the paper
to illustrate the results.   

\section{Designs, Monomials and Models}\label{sp}
A~\emph{design}, \(\mathcal{D}\), is a set of \(n>1\) distinct points in \(\mathbb{R}^r\) on  indeterminates \(x^{\vphantom{2}}_1, \ldots, x^{\vphantom{2}}_r\), with
\(x^{\vphantom{2}}_j\) taking \(n^{\vphantom{2}}_j>1\) values, for $j = 1, \ldots, r$. The estimability properties of  \(\mathcal{D}\) can be obtained from the \(n \times r\) \emph{design matrix\/} \(X_{\mathcal{D}}\), with \((i,j)\)th entry, \(x^{\vphantom{2}}_{ij}\), being the value of the \(j\)th indeterminate in
the \(i\)th point. Thus:
\begin{equation}\label{desmat}
X_{\mathcal{D}}=\begin{bmatrix} x^{\vphantom{2}}_{11}&x^{\vphantom{2}}_{12}&\cdots &x^{\vphantom{2}}_{1r} \\
x^{\vphantom{2}}_{21}&x^{\vphantom{2}}_{22}&\cdots &x^{\vphantom{2}}_{2r} \\
\vdots & \vdots& \ddots & \vdots \\
x^{\vphantom{2}}_{n1}&x^{\vphantom{2}}_{n2}&\cdots &x^{\vphantom{2}}_{nr}\\
\end{bmatrix}=\begin{bmatrix}\mathbf{x}^{\vphantom{2}}_1 \, \mathbf{x}^{\vphantom{2}}_2 \cdots \mathbf{x}^{\vphantom{2}}_r \end{bmatrix}.
\end{equation}
Here, the \(i\)th row of \(X_{\mathcal{D}}\) contains the values of the indeterminates at the \(i\)th point. The \(j\)th column  contains the values of the \(j\)th
indeterminate at the \(n\) distinct points and is denoted \(\mathbf{x}^{\vphantom{2}}_j\), 
for \(j=1, \ldots, r\). 

\begin{example}[label=thm:key]
\label{example:1}
Consider design \(\mathcal{D}1\) on indeterminates \(x^{\vphantom{2}}_1,x^{\vphantom{2}}_2,x^{\vphantom{2}}_3\) with five distinct points and design matrix:
\begin{equation}\label{desmat1}
X_{\mathcal{D}1}=\begin{bmatrix} ~1&-1&~0\\
-1&-1&~1\\
~0&~1&~1\\
~0&~1&-1\\
~1&-1&-1\\
\end{bmatrix}.
\end{equation}
With \(n=5\), \(\mathcal{D}1\) has a smaller number of distinct points than would be employed in most circumstances. However, the
small size of \(X_{\mathcal{D}1}\) lends  its use for efficient demonstration of concepts which also apply to designs with larger \(n\). 
The number of observations is at least as
large as \(n\). If some runs are conducted multiple times then \(\mathcal{D}1\) could have many more than five observations but its estimability capability would still be determined by the properties of the \(5 \times 3\) matrix \(X_{\mathcal{D}1}\). 
Well known designs with multiple instances of runs include the Central Composite
Designs introduced by~\citet{BoxWilson}, and the Box--Behnken Designs of~\citet{BoxBehnken}, where multiple observations are
collected at the centre points.
\end{example}

There are two objectives: given a design, we want to determine polynomial models in the \(r\) indeterminates for which
unbiased estimates of the unknown parameters can be obtained; conversely, given knowledge of an experimental situation, we want to construct a  
design that enables us to fit relevant models. 

\medskip
We  introduce some initial concepts and terminology. 
\begin{definition} 
A~\emph{monomial\/} in the indeterminates \(x^{\vphantom{2}}_1,\ldots,x^{\vphantom{2}}_r\) is a term of the 
form \(x_1^{\alpha_1}x_2^{\alpha_2}\cdots x_r^{\alpha_r}\) with \(\alpha_j \in \mathbb{Z}_{\ge 0}\), for \(j=1, \ldots, r\). Where appropriate, we will use multi-index notation, writing such expressions as $x^{\alpha}$, where $x = \left( x^{\vphantom{2}}_1, \ldots, x^{\vphantom{2}}_r \right)$ and $\alpha = \left( \alpha^{\vphantom{2}}_1, \ldots, \alpha^{\vphantom{2}}_r \right) \in \mathbb{Z}_{\ge 0}^r$. 
\end{definition}
\noindent There is a one-to-one correspondence between monomials $x^{\alpha}$ and their corresponding ordered $r$-tuples \( \alpha = (\alpha^{\vphantom{2}}_1, \ldots, \alpha^{\vphantom{2}}_r) \in \mathbb{Z}_{\ge 0}^r\). With this correspondence in mind, we will often, for brevity, use the expression ``the monomial $\alpha$'' to mean the monomial $x^{\alpha}$. 
The total degree of
the monomial with \(r\)-tuple \(\alpha\) is \({\rm d}(\alpha)=\sum_{j=1}^r \alpha^{\vphantom{2}}_j\).  The
monomial with total degree zero has representation $1$ with corresponding \(r\)-tuple \(( 0,\ldots, 0) \). By an inductive argument using the Hockey Stick Identity, see for example~\citet{Jones}, there
are \((d+r-1)!/d!(r-1)!\) monomials of total degree \(d\). Monomials with at least two non-zero exponents are 
referred to as {\it interactions}. 

In situations involving more than one monomial, the $r$-tuple corresponding to the $k$th monomial is expressed as 
\( \alpha^{\vphantom{2}}_k = \left( \alpha^{\vphantom{2}}_{k, 1}, \ldots, \alpha^{\vphantom{2}}_{k, r} \right) \in  \mathbb{Z}_{\ge 0}^r \). 

\begin{example}[continues=thm:key]
Examples of monomials in \(x^{\vphantom{2}}_1, x^{\vphantom{2}}_2, x^{\vphantom{2}}_3\) are \(x_1^2\) and \(x^{\vphantom{2}}_1x_2^3x^{\vphantom{2}}_3\) with corresponding $3$-tuples \((2,0,0)\) and \(( 1,3,1)\), respectively. Similarly, identifying monomials with their corresponding $3$-tuples as mentioned above, by ``the monomial $(6, 7, 8)$'' we would mean the monomial $x_1^6 x_2^7 x_3^8$. 
\end{example}

All models considered are polynomial models. Hence,  \(E[Y]\) comprises a linear combination of \(p\) 
monomials in the  indeterminates and is written as:
 
\begin{equation}\label{model}
E[Y] = \sum_{k=1}^p \theta_{\alpha_k} x^{\alpha_k} 
\equiv \sum_{k=1}^p\theta_{\alpha_k} \left( \prod_{j=1}^r x_j^{\alpha_{k, j}} \right), 
\end{equation}
where \(\theta_{\alpha_k}\) is the unknown coefficient of the \(k\)th monomial $x^{\alpha_k}$ with corresponding \(r\)-tuple \( \alpha^{\vphantom{2}}_k = \left( \alpha^{\vphantom{2}}_{k, 1}, \ldots, \alpha^{\vphantom{2}}_{k, r} \right) \in  \mathbb{Z}_{\ge 0}^r \).

\begin{example}[continues=thm:key]
Polynomial models in three indeterminates include:
\begin{eqnarray*}
{\rm Model~I:~~}E[Y]&=&\theta_{( 0,0,0)} +\theta_{( 1,0,0)} x^{\vphantom{2}}_1+\theta_{( 2,0,0)} x_1^2;\\ 
{\rm Model~II:~~}E[Y]&=&\theta_{( 0,0,0)} +\theta_{( 1,0,0)}x^{\vphantom{2}}_1+\theta_{( 0,1,0)}x^{\vphantom{2}}_2+\theta_{( 0,0,1)}x^{\vphantom{2}}_3+\theta_{( 0,1,1)}x^{\vphantom{2}}_2x^{\vphantom{2}}_3+\theta_{( 0,0,2)}x_3^2;\\
{\rm Model~III:~~}E[Y]&=&\theta_{( 1,0,0)}x^{\vphantom{2}}_1+\theta_{ ( 0,0,1)}x^{\vphantom{2}}_3+\theta_{( 1,0,1)}x^{\vphantom{2}}_1x^{\vphantom{2}}_3+\theta_{( 2,0,1)}x_1^2x^{\vphantom{2}}_3. 
\end{eqnarray*}
\end{example}

\begin{definition}
Consider a design,  \(\mathcal{D}\), and a polynomial  model, \(\mathcal{M}\), in some or all of the indeterminates \(x^{\vphantom{2}}_1,\ldots,x^{\vphantom{2}}_r\). The~\emph{model matrix\/} for \(\mathcal{D}\) with
respect to \(\mathcal{M}\) is the \(n \times p\) matrix, \(W_{\mathcal{DM}}\), with \((i,k)\)th entry \( \Pi_{j=1}^r x_{ij}^{\alpha_{k, j}}\), where $x^{\vphantom{2}}_{ij}$ are defined as in~\eqref{desmat}.
\end{definition}

The \(k\)th column of \(W_{\mathcal{DM}}\) is, therefore, the $n \times 1$ column vector consisting of the values of the \(k\)th monomial $x^{\alpha_k}$ evaluated at the \(n\) points of \(\mathcal{D}\). This column vector will be referred to as the {\it monomial vector for \({\alpha_k}\)} and be denoted by \( \mathbf{x}^{\alpha_k} \) or \( \Pi_{j=1}^r \mathbf{x}_j^{\alpha_{k, j}} \), where $\mathbf{x}^{\vphantom{2}}_j$ is the $j$th column of the design matrix $X_{\mathcal{D}}$ (cf.~\eqref{desmat}). In the particular case of the monomial \(1\), the monomial vector is the \(n \times 1\) vector all of whose entries are $1$, which we denote by \(\mathbf{1}\).

\medskip
The coefficients \(\theta_{\alpha_1}, \ldots, \theta_{\alpha_p}\) are estimable from \(\mathcal{D}\) if and only if \({\rm rank}(W_{\mathcal{DM}})=p\). 

\begin{definition}\label{ide}
A polynomial model of form \eqref{model} is said to be~\emph{identifiable\/} from design \(\mathcal{D}\) if and only if \(\theta_{\alpha_1}, \ldots, \theta_{\alpha_p}\) are estimable. A model which is identifiable from \(\mathcal{D}\) and has \(p=n\) is said to be \emph{saturated}.
\end{definition}
It follows from Definition \ref{ide} that a model \(\mathcal{M}\) is identifiable from \(\mathcal{D}\) if and only if \({\rm
rank}(W_{\mathcal{DM}})=p.\) 

\begin{example}[continues=thm:key]
We consider Models I to III with respect to \(\mathcal{D}1\). Model I has \(p=3\) and
\begin{equation*}
W_{\mathcal{D}1I}=\begin{bmatrix}
\mathbf{1} & \mathbf{x}^{\vphantom{2}}_1 & \mathbf{x}_1^2\\
\end{bmatrix}=
\begin{bmatrix} 1&1&1\\
1&-1&1\\
1&0&0\\
1&0&0\\
1&1&1\\
\end{bmatrix}.
\end{equation*}
The matrix \(W_{\mathcal{D}1I}\) has rank 3, establishing that Model I is identifiable from \(\mathcal{D}1\). 

Model~II has \(p=6\) and
\begin{equation*}
W_{\mathcal{D}1II}=\begin{bmatrix}
\mathbf{1} & \mathbf{x}^{\vphantom{2}}_1 & \mathbf{x}^{\vphantom{2}}_2 & \mathbf{x}^{\vphantom{2}}_3 & \mathbf{x}^{\vphantom{2}}_2 \mathbf{x}^{\vphantom{2}}_3& \mathbf{x}_3^2 \\
\end{bmatrix}=
\begin{bmatrix}
1 & 1& -1& 0&0&0\\
1& -1& -1&1& -1&1\\
1&0&1&1&1&1\\
1&0&1&-1&-1&1\\
1&1&-1&1&-1&1\\
\end{bmatrix}.
\end{equation*}
Since rank\((W_{\mathcal{D}1II})=5\), Model II is not identifiable from \(\mathcal{D}1\). More generally, \(\mathcal{D}1\) has \(n=5\), so no model with \(p>5\) can be identifiable. 

Finally, for Model III, \(p=4\) and 
\begin{equation*}
W_{\mathcal{D}1III}=\begin{bmatrix}
\mathbf{x}^{\vphantom{2}}_1 & \mathbf{x}^{\vphantom{2}}_3 & \mathbf{x}^{\vphantom{2}}_1 \mathbf{x}^{\vphantom{2}}_3 & \mathbf{x}_1^2 \mathbf{x}^{\vphantom{2}}_3 \\
\end{bmatrix}=
\begin{bmatrix}
1 & 0&0&0\\
-1&1& -1& 1\\
0&1&0&0\\
0&-1&0&0\\
1&1&1&1\\
\end{bmatrix}.
\end{equation*}
The matrix \(W_{\mathcal{D}1III}\) has rank 4, indicating that Model III is identifiable from \(\mathcal{D}1\). 
\end{example}

From this point onwards, in seeking identifiable models we will restrict consideration to an important sub-class of models, namely hierarchical models.

\begin{definition}\label{hi}
A model is \emph{hierarchical\/} if for any monomial,\(( \alpha^{\vphantom{2}}_1, \ldots, \alpha^{\vphantom{2}}_r ) \in \mathbb{Z}_{\ge 0}^r \), in the model, every monomial \( (\beta^{\vphantom{2}}_1, \ldots, \beta^{\vphantom{2}}_r) \in \mathbb{Z}_{\ge 0}^r \) such
that \(\beta^{\vphantom{2}}_j \le \alpha^{\vphantom{2}}_j\), for \(j=1, \ldots,r\), is also contained in the model. Monomials of the form \((\beta^{\vphantom{2}}_1, \ldots, \beta^{\vphantom{2}}_r)\) are said to be \emph{constituent\/} monomials of \((\alpha^{\vphantom{2}}_1, \ldots, \alpha^{\vphantom{2}}_r)\). 
\end{definition}

\noindent Restricting attention to hierarchical models is intuitively appealing. For example, consider inclusion of the interaction \(x^{\vphantom{2}}_1x^{\vphantom{2}}_2\) in a model. The interpretation of this term  becomes challenging unless \(x^{\vphantom{2}}_1\) and \(x^{\vphantom{2}}_2\) are also included. The term \(x^{\vphantom{2}}_1x^{\vphantom{2}}_2\) is precisely an effect associated with \(x^{\vphantom{2}}_1\) and \(x^{\vphantom{2}}_2\) that cannot be
explained by taking \(x^{\vphantom{2}}_1\) and \(x^{\vphantom{2}}_2\) separately. Thus, we seek  models which are both identifiable and hierarchical. From Definition~\ref{hi}, any monomial has $1$ as a constitutent monomial and, therefore, every hierarchical model contains the monomial $1$. Further, again from Definition \ref{hi}, inclusion of the monomial
\( \alpha =( \alpha^{\vphantom{2}}_1, \ldots,\alpha^{\vphantom{2}}_r )\) in a hierarchical model, implies the inclusion of \(\Pi_{j=1}^r(\alpha^{\vphantom{2}}_j+1)\) constituent monomials of \( \alpha \). This leads to:  

\begin{theorem}\label{neci}
A necessary condition for the monomial \( \alpha =(\alpha^{\vphantom{2}}_1, \ldots, \alpha^{\vphantom{2}}_r) \) to be included in a hierarchical model which is identifiable under \(\mathcal{D}\) is that \( \Pi_{j=1}^r(\alpha^{\vphantom{2}}_j+1) \le n\).
\end{theorem}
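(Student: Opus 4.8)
The plan is to convert the hierarchy requirement into a lower bound on the number of monomials $p$ in the model, to convert identifiability into an upper bound on $p$, and then to compare the two bounds.

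First I would invoke the observation recorded immediately before the statement: since the model is hierarchical and contains the monomial $\alpha = (\alpha_1, \ldots, \alpha_r)$, Definition~\ref{hi} forces it to contain every constituent monomial $\beta = (\beta_1, \ldots, \beta_r)$ with $\beta_j \le \alpha_j$ for each $j$. As each exponent $\beta_j$ ranges independently over the $\alpha_j + 1$ values $0, 1, \ldots, \alpha_j$, there are exactly $\prod_{j=1}^r(\alpha_j+1)$ such constituent monomials. Because all of these necessarily lie in the model, the total number $p$ of monomials satisfies $p \ge \prod_{j=1}^r(\alpha_j+1)$.

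Next I would use identifiability. By Definition~\ref{ide}, the model being identifiable from $\mathcal{D}$ is equivalent to $\mathrm{rank}(W_{\mathcal{DM}}) = p$. The model matrix $W_{\mathcal{DM}}$ has dimensions $n \times p$, so its rank cannot exceed its number of rows, giving $\mathrm{rank}(W_{\mathcal{DM}}) \le n$ and hence $p \le n$. Chaining the two bounds yields $\prod_{j=1}^r(\alpha_j+1) \le p \le n$, which is the claimed inequality.

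I do not expect a genuine obstacle here: the argument is a direct combination of a counting fact about hierarchical models with the elementary rank bound $\mathrm{rank}(W_{\mathcal{DM}}) \le \min(n,p)$. The only point requiring a moment's care is the combinatorial count of constituent monomials, namely that the independent choices of each exponent produce the product $\prod_{j=1}^r(\alpha_j+1)$ rather than a sum; but this is precisely the statement already established in the text preceding the theorem, so it may be quoted directly rather than re-derived.
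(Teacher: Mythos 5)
Your proof is correct and follows essentially the same route as the paper: the paper's theorem is presented as an immediate consequence of the observation that a hierarchical model containing \(\alpha\) must contain all \(\Pi_{j=1}^r(\alpha^{\vphantom{2}}_j+1)\) constituent monomials, combined with the fact that identifiability forces \(p = \mathrm{rank}(W_{\mathcal{DM}}) \le n\). Your write-up simply makes these two bounds and their chaining explicit, which is exactly the intended argument.
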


\begin{example}[continues=thm:key]
The monomials of Model I are: $1$, \(x^{\vphantom{2}}_1\) and \(x_1^2\). Equivalently, we will say that Model I has monomial set $\{ 1, x^{\vphantom{2}}_1, x_1^2 \}$. Thus this model is hierarchical.

With monomial set \(\{1, x^{\vphantom{2}}_1, x^{\vphantom{2}}_2,x^{\vphantom{2}}_3,x^{\vphantom{2}}_2x^{\vphantom{2}}_3, x_3^2\}\), Model II is also hierarchical. As noted already, Model II is
not identifiable. However, the removal of either  \(x^{\vphantom{2}}_2x^{\vphantom{2}}_3\) or \(x_3^2\) yields a model with \(p=5\) which is hierarchical,
identifiable and saturated. 

Model III does not contain the monomial $1$ and so is not hierarchical. The model resulting from the inclusion of the constant term would also not be hierarchical since it would contain \(x_1^2x^{\vphantom{2}}_3\) but not \(x_1^2\). Further, by Theorem~\ref{neci}, the term \(x_1^2x^{\vphantom{2}}_3\) cannot be contained in any hierarchical identifiable model under \(\mathcal{D}1\).
\end{example}

\section{Hierarchical Identifiable Models}\label{HIS}

In this section we establish that, given any design, a  hierarchical identifiable saturated model exists. For brevity,
hierarchical identifiable models, and the subset of these that are saturated, will be abbreviated to {\it HI} and {\it HIS}
models, respectively. The following result will be used to establish the existence of at least one {\it HIS} model for every design. As well as establishing existence, the result provides insight into the nature of models that are identifiable and saturated.

\begin{proposition}\label{van}
For any design, \(\mathcal{D}\), a set of monomials \( \alpha^{\vphantom{2}}_1, \ldots, \alpha^{\vphantom{2}}_{n-1} \) exists with 
\({\rm
d}(\alpha^{\vphantom{2}}_j)=j\), such that the monomial vectors \(\boldsymbol{1}, \mathbf{x}^{\alpha_1}, \ldots, \mathbf{x}^{\alpha_{n-1}} \)
form a basis for \(\mathbb{R}^n\).
\end{proposition}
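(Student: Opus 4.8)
The plan is to avoid the naive greedy construction (at each degree, adjoin any monomial whose vector is new), because that can get stuck: at low degrees the design may offer several new directions at once, yet the statement forces us to take exactly one monomial per degree, so a carelessly chosen early monomial may leave no new direction available at a later degree. Instead, I would first produce a basis of \(\mathbb{R}^n\) consisting of \emph{powers of a single linear form}, which automatically respects the degree grading with one vector of each total degree \(0,1,\dots,n-1\), and only then convert each such power into an honest monomial.

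First I would choose a linear form \(\ell = \sum_{j=1}^r c_j x^{\vphantom{2}}_j\) that separates the design points, i.e.\ whose values \(t_i := \sum_{j=1}^r c_j x^{\vphantom{2}}_{ij}\) at the \(n\) points are pairwise distinct. Such a \(c\) exists because, the points being distinct, each coincidence \(t_i = t_{i'}\) confines \(c\) to the proper hyperplane \(\{c \in \mathbb{R}^r : \sum_{j=1}^r c_j (x^{\vphantom{2}}_{ij} - x^{\vphantom{2}}_{i'j}) = 0\}\), and \(\mathbb{R}^r\) is not a finite union of proper hyperplanes. With the \(t_i\) distinct, the vectors \(\boldsymbol{1}\) and \((t_i^k)_{i=1}^n\) for \(k=1,\dots,n-1\) form a basis of \(\mathbb{R}^n\), since the associated \(n \times n\) matrix is Vandermonde with nonzero determinant \(\prod_{i<i'}(t_{i'}-t_i)\). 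The \(k\)th of these vectors is precisely the evaluation vector of the polynomial \(\ell^k\), which by the multinomial expansion is a linear combination \(\ell^k = \sum_{{\rm d}(\gamma)=k} c_\gamma\, x^\gamma\) of monomials all of total degree exactly \(k\); hence this vector lies in the span of the degree-\(k\) monomial vectors.

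The second stage converts this ``power basis'' into a monomial basis degree by degree, using the Steinitz exchange lemma. Processing \(k = 1, \dots, n-1\) in turn, I keep a current basis of the form \(\{\mathbf{x}^{\gamma_0}, \dots, \mathbf{x}^{\gamma_{k-1}}, \boldsymbol{\ell}^{k}, \dots, \boldsymbol{\ell}^{n-1}\}\), where \(\gamma_0 = (0,\dots,0)\), where \({\rm d}(\gamma_i)=i\) for the monomials already selected, and where \(\boldsymbol{\ell}^{m}\) denotes the evaluation vector of \(\ell^m\). Writing \(\boldsymbol{\ell}^{k} = \sum_{{\rm d}(\gamma)=k} c_\gamma\, \mathbf{x}^{\gamma}\) and invoking exchange: because \(\boldsymbol{\ell}^{k}\) belongs to the current basis it cannot lie in the span of the remaining basis vectors, so some monomial \(\mathbf{x}^{\gamma}\) with \(c_\gamma \ne 0\) also fails to lie in that span; replacing \(\boldsymbol{\ell}^{k}\) by this \(\mathbf{x}^{\gamma}\) keeps a basis, with \({\rm d}(\gamma)=k\) as required. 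After the final step we are left with \(\{\mathbf{x}^{\gamma_0}, \dots, \mathbf{x}^{\gamma_{n-1}}\}\), a basis of \(\mathbb{R}^n\) with \({\rm d}(\gamma_j)=j\) and \(\gamma_0\) giving \(\boldsymbol{1}\), which is exactly the asserted conclusion on setting \(\alpha_j = \gamma_j\).

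The main point to get right---and the only place where the degree bookkeeping could conceivably fail---is the exchange step, specifically that one can always replace a power \(\boldsymbol{\ell}^{k}\) by a monomial \emph{of the same degree} \(k\). This is guaranteed precisely because \(\boldsymbol{\ell}^{k}\) is a combination of degree-\(k\) monomial vectors only, so the exchange lemma forces the substitute to be drawn from that same set. Equivalently, one may skip the sequential exchange and apply Rado's theorem on independent transversals to the families \(M_k\) of degree-\(k\) monomial vectors: the Vandermonde basis certifies Rado's rank condition \(\dim \operatorname{span}\big(\bigcup_{k\in J} M_k\big) \ge |J|\) for every index set \(J\), yielding the desired transversal in one step. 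I would then only need to double-check the two elementary inputs---existence of the separating form and Vandermonde nondegeneracy---both of which are routine.
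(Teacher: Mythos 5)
Your proposal is correct and follows essentially the same route as the paper's own proof: choose a linear form separating the design points, use the Vandermonde matrix to get the basis \(\{\mathbf{1}, \mathbf{z}, \dots, \mathbf{z}^{n-1}\}\) of powers (one per total degree), and then exchange each power for a monomial vector of the same degree. The only differences are matters of detail rather than approach: you justify the existence of the separating linear form via the finite-union-of-hyperplanes argument and make the degree-preserving exchange explicit via the Steinitz lemma (with Rado's theorem as an optional shortcut), both of which the paper asserts without elaboration.
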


\begin{proof}
Since the rows of \(X_{\mathcal{D}}\) are distinct, some linear combination of the columns of
\(X_{\mathcal{D}}\) can be found that yields a vector \(\mathbf z=(z_1, \ldots, z_n)^T\) with \(n\) distinct elements. Thus 
\(\mathbf z = \sum_{j=1}^r c_j \mathbf{x}^{\vphantom{2}}_j \), for some real coefficients \(c_1, \ldots, c_r\), not all zero.
Since the entries of \(\mathbf z\) are all different, the \(n \times n\) matrix:
\begin{equation*}
Z=\begin{bmatrix}
\mathbf{1} & \mathbf{z} & \mathbf{z}^2 &\cdots & \mathbf{z}^{n-1}\\
\end{bmatrix}=
\begin{bmatrix} 1&z_1&z_1^2&\cdots &z_1^{n-1} \\
1&z_2&z_2^2&\cdots &z_2^{n-1} \\
\vdots & \vdots & \vdots& \ddots & \vdots \\
1&z_n&z_n^2&\cdots &z_n^{n-1} \\
\end{bmatrix}
\end{equation*}
is a Vandermonde matrix and has rank \(n\) (see \S 13.6 of~\citet{Harville}). Hence a basis for \(\mathbb{R}^n\) is given by
\(\{\mathbf{1}, \mathbf{z},\mathbf{z}^2,\ldots, \mathbf{z}^{n-1} \}\). Here \( \mathbf{z}^q \) is a linear combination of
monomial vectors in \(x^{\vphantom{2}}_1, \ldots, x^{\vphantom{2}}_r\) with total degree \(q\). A monomial vector corresponding to a monomial of total degree
one, \(\mathbf{x}^{\alpha_1}\) say, can be found to replace \(\mathbf{z}\) in the basis, so that \(\{\mathbf{1}, 
\mathbf{x}^{\alpha_1}, \mathbf{z}^2, \ldots, \mathbf{z}^{n-1} \}\) is an updated basis for \(\mathbb{R}^n\). Similarly, in this
updated basis, a vector relating to a monomial of total degree two, \(\mathbf{x}^{\alpha_2} \) say, can be found to replace
\( \mathbf{z}^2 \),  to further update the basis to \( \{\mathbf{1}, \mathbf{x}^{\alpha_1}, \mathbf{x}^{\alpha_2}, \mathbf{z}^3, \ldots,
\mathbf{z}^{n-1} \}\). Repeating this process, we progressively replace \(\mathbf{z}^3, \ldots, \mathbf{z}^{n-1} \) by \( \mathbf{x}^{\alpha_3}, \ldots, \mathbf{x}^{\alpha_{n-1}} \) giving a basis \(\{\mathbf{1}, \mathbf{x}^{\alpha_1}, \ldots, \mathbf{x}^{\alpha_{n-1}} \}\) for \(\mathbb{R}^n\), as required. 
\end{proof}

\noindent Proposition~\ref{van} leads immediately to a result on monomials with total degree exceeding \(n-1\):
\begin{corollary}\label{cor1}
Consider a design \(\mathcal{D}\). Let \(\alpha\) be a monomial with \({\rm d}(\alpha) \ge n\). The monomial vector for \(\alpha\) in the design \(\mathcal{D}\) can be expressed as a
linear combination of vectors of monomials each with total degree strictly less than \(n\).  
\end{corollary}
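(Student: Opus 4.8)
The plan is to read off the result as an immediate consequence of Proposition~\ref{van}. That proposition guarantees the existence of monomials \(\alpha^{\vphantom{2}}_1, \ldots, \alpha^{\vphantom{2}}_{n-1}\) with \(\mathrm{d}(\alpha^{\vphantom{2}}_j) = j\) whose monomial vectors, together with \(\mathbf{1}\), form a basis \(\{\mathbf{1}, \mathbf{x}^{\alpha_1}, \ldots, \mathbf{x}^{\alpha_{n-1}}\}\) for \(\mathbb{R}^n\). The single observation driving the proof is that every one of these \(n\) basis vectors is the monomial vector of a monomial whose total degree is at most \(n-1\), and hence strictly less than \(n\).

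First I would note that, for any monomial \(\alpha\) whatsoever, the monomial vector \(\mathbf{x}^{\alpha}\) is by construction an \(n \times 1\) column vector, i.e.\ an element of \(\mathbb{R}^n\); this holds irrespective of how large \(\mathrm{d}(\alpha)\) is. In particular, when \(\mathrm{d}(\alpha) \ge n\), the vector \(\mathbf{x}^{\alpha}\) still lies in \(\mathbb{R}^n\). Since \(\{\mathbf{1}, \mathbf{x}^{\alpha_1}, \ldots, \mathbf{x}^{\alpha_{n-1}}\}\) spans \(\mathbb{R}^n\), I would expand \(\mathbf{x}^{\alpha}\) in this basis, obtaining scalars \(c^{\vphantom{2}}_0, \ldots, c^{\vphantom{2}}_{n-1}\) with \(\mathbf{x}^{\alpha} = c^{\vphantom{2}}_0 \mathbf{1} + \sum_{j=1}^{n-1} c^{\vphantom{2}}_j \mathbf{x}^{\alpha_j}\). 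As each \(\mathbf{x}^{\alpha_j}\), and \(\mathbf{1} = \mathbf{x}^{(0,\ldots,0)}\), is the monomial vector of a monomial of total degree strictly below \(n\), this expansion exhibits \(\mathbf{x}^{\alpha}\) as the required linear combination, completing the argument.

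There is essentially no obstacle here: the entire substance of the statement is already contained in Proposition~\ref{van}, and the corollary merely repackages the spanning property of the basis together with the elementary fact that a monomial vector always has exactly \(n\) entries. The only point I would make explicit is that the degrees of the chosen basis monomials range over \(0, 1, \ldots, n-1\) and so are uniformly bounded by \(n-1 < n\), which is precisely the threshold appearing in the statement; in particular, no separate estimate or induction on \(\mathrm{d}(\alpha)\) is needed.
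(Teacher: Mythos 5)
Your proof is correct and matches the paper's approach exactly: the paper states Corollary~\ref{cor1} without proof as an immediate consequence of Proposition~\ref{van}, and your argument---expanding the monomial vector \(\mathbf{x}^{\alpha} \in \mathbb{R}^n\) in the basis \(\{\mathbf{1}, \mathbf{x}^{\alpha_1}, \ldots, \mathbf{x}^{\alpha_{n-1}}\}\), whose members all have total degree at most \(n-1\)---is precisely that immediate deduction, just written out in full.
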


\begin{example}[continues=thm:key]
For \(\mathcal{D}1\), any monomial \(\alpha\) with \({\rm d}(\alpha) \ge 5\), such
as \(x_1^2x_2^2x^{\vphantom{2}}_3\), will have monomial vector which is the linear combination of vectors corresponding to monomials with total
degree less than five.
\end{example}

\medskip
\noindent{}One of the main results of our paper is the following: 

\begin{theorem}\label{hier}
Let \(\mathcal{D}\) be a design. Then, there exists a {\it HIS} model that is supported by \(\mathcal{D}\).
\end{theorem}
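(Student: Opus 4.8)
The plan is to realise $\mathbb{R}^n$ as the space of real-valued functions on the $n$ points of $\mathcal{D}$, so that the $k$th column of a model matrix is the evaluation of the monomial $x^{\alpha_k}$ at these points. Proposition~\ref{van} guarantees that the monomial vectors $\{\mathbf{x}^\alpha : \alpha \in \mathbb{Z}_{\ge 0}^r\}$ span $\mathbb{R}^n$. A \emph{HIS} model is then exactly a set $B$ of monomials that is a downset for the coordinatewise order (hierarchical) and whose monomial vectors form a basis of $\mathbb{R}^n$ (identifiable and, since a basis has $n$ elements, saturated). So the whole task reduces to producing a downset of $n$ monomials with linearly independent monomial vectors.

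First I would fix a \emph{monomial order} $\prec$ on $\mathbb{Z}_{\ge 0}^r$: a well-order with least element $(0,\ldots,0)$ that is compatible with addition, in the sense that $\nu \prec \gamma$ implies $\nu + \eta \prec \gamma + \eta$ for every $\eta \in \mathbb{Z}_{\ge 0}^r$; the graded lexicographic order is a concrete choice. I then define $B$ by the greedy rule: traverse the monomials in increasing $\prec$-order and retain $\mu$ precisely when $\mathbf{x}^\mu \notin \mathrm{span}\{\mathbf{x}^\nu : \nu \prec \mu\}$. By the standard greedy construction of a basis, $\{\mathbf{x}^\mu : \mu \in B\}$ is linearly independent, and since the monomial vectors span $\mathbb{R}^n$ by Proposition~\ref{van}, they form a basis; hence $|B| = n$ and the associated model is identifiable and saturated. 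Using a graded order together with Corollary~\ref{cor1}, only monomials of total degree at most $n-1$ can enter $B$, so the selection is a finite procedure.

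The crux is to show that $B$ is a downset, and this is where compatibility of $\prec$ with addition is indispensable. I would argue the contrapositive: if $\gamma \notin B$ and $\delta \ge \gamma$ coordinatewise, then $\delta \notin B$. Indeed, $\gamma \notin B$ means $\mathbf{x}^\gamma = \sum_{\nu \prec \gamma} c_\nu \mathbf{x}^\nu$ as an identity of vectors, equivalently the polynomial $x^\gamma - \sum_{\nu \prec \gamma} c_\nu x^\nu$ vanishes at every point of $\mathcal{D}$. Multiplying this identity by the monomial vector $\mathbf{x}^{\delta - \gamma}$ and using $\mathbf{x}^{\delta - \gamma} \odot \mathbf{x}^\nu = \mathbf{x}^{\nu + \delta - \gamma}$ (Hadamard product) gives $\mathbf{x}^\delta = \sum_{\nu \prec \gamma} c_\nu \mathbf{x}^{\nu + (\delta - \gamma)}$. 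Compatibility yields $\nu + (\delta - \gamma) \prec \gamma + (\delta - \gamma) = \delta$ for each surviving term, so $\mathbf{x}^\delta \in \mathrm{span}\{\mathbf{x}^\mu : \mu \prec \delta\}$ and therefore $\delta \notin B$. Thus every constituent of a monomial in $B$ again lies in $B$, so $B$ is hierarchical.

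Combining the two properties, $B$ indexes a model that is hierarchical, identifiable and saturated, establishing the theorem. I expect the downset step to be the only genuine obstacle: the spanning and independence facts are routine once Proposition~\ref{van} is in hand, whereas the hierarchical property really relies on multiplying a vanishing relation by a monomial and controlling its leading term, which is precisely the feature a monomial order supplies. It is worth flagging that $B$ is the set of standard monomials for the order $\prec$, foreshadowing the Gr\"obner basis viewpoint of \S\ref{sgb}, although the argument above uses only the elementary order axioms and none of the computational machinery.
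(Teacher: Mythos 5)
Your proof is correct, but it takes a genuinely different route from the paper's. Both arguments are greedy constructions that lean on Proposition~\ref{van}, yet they distribute the work in opposite ways. The paper organises the selection by total degree and, at step $q$, only ever considers monomials all of whose constituents have already been selected; the hierarchical property is then automatic, and the real content is the claim that every monomial whose vector is \emph{not} selected nevertheless lies in the span of the selected vectors, which (combined with Proposition~\ref{van}) forces the process to reach $n$ vectors by step $n-1$. You instead select greedily along a fixed monomial order $\prec$ with no constituent restriction, so linear independence and spanning --- hence saturation and identifiability --- are immediate, and the real content is the downset property, which you prove by multiplying a vanishing linear relation by $\mathbf{x}^{\delta-\gamma}$ and invoking compatibility of $\prec$ with addition. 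That multiplication-plus-compatibility step is precisely what the paper's proof leaves implicit when it asserts that a degree-$q$ monomial never considered at step $q$ ``is established as being a linear combination of vectors in $S$''; your argument, run along a graded order, can be read as a rigorous justification of exactly that kind of assertion. As for what each approach buys: the paper's method permits arbitrary, practitioner-chosen orderings within each degree, and can therefore reach leaves of the statistical fan that lie outside the algebraic fan (cf.~Example~\ref{ex:byebyeGroebner}), whereas your construction produces exactly the standard-monomial models of the algebraic fan of \S\ref{sgb} --- a strictly smaller class of models, but sufficient for existence, with every step justified from the order axioms alone and with the Gr\"obner viewpoint made transparent without any of its computational machinery.
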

\begin{proof}
The proof is by construction of a basis for $\mathbb{R}^n$. The method is as follows: 

\medskip\noindent {\bf Step 0}: Let \(S=\{\mathbf{1}\}\).
 
\smallskip\noindent {\bf Step 1}: Let \(T_1 =\{ \mathbf{x}^{\vphantom{2}}_1,\ldots, \mathbf{x}^{\vphantom{2}}_r \}\), the set of vectors (in $\mathbb{R}^n$) of the monomials of total degree
one. Consider the members of \(T_1\), in any order, one at a time. If a vector is linearly independent of the vectors already in
\(S\), then it is added to \(S\). Otherwise it is discarded.

\smallskip\noindent{\bf Step 2}: Let \(T_2\) comprise the set of vectors corresponding to monomials of total degree 2 for which the
vectors for any constituent monomials are in \(S\). Consider the members of \(T_2\) in any order, one at a
time. If a vector is linearly independent of those already in \(S\), it is added to \(S\). Otherwise it is discarded. 

\smallskip\noindent 
The method progresses step by step, the \(q\)th step being:  

\smallskip\noindent{\bf Step $\boldsymbol{q}$}:  Let \(T_{q}\) comprise the set of vectors corresponding to monomials of total degree \(q\) for
which the vectors for any constituent monomials  are in \(S\). Consider the members 
of \(T_{q}\), in any order, one at a time. If a vector is linearly independent of those vectors already in \(S\), then it is
added to \(S\). Otherwise it is discarded. 

\smallskip\noindent By the end of Step \(q\) each monomial of total degree \(q\) either has its vector included in \(S\),
or its monomial vector is established as being a linear combination of vectors in \(S\). Thus, at the end of Step \(q\),
any monomial of total degree less than or equal to \(q\) for which the corresponding vector is not contained in \(S\) will have monomial
vector which is a linear combination of vectors in \(S\). By construction, at each stage the vectors in \(S\) correspond to a hierarchical set of
monomials.

\smallskip\noindent This process terminates once \(S\) contains \(n\) monomial vectors. By Proposition~\ref{van}, this occurs by the end of Step \(n-1\), at the latest. Moreover, these $n$ monomial vectors form a basis for $\mathbb{R}^n$ and yield an $n \times n$ model matrix that is of full rank. Thus, the model is a {\it HIS} model in $\mathcal{D}$ and
consequently the set of {\it HIS} models supported by \(\mathcal{D}\) is non-empty, as required. 
\end{proof}

\noindent Theorem \ref{hier} establishes that for every design, \(\mathcal{D}\), there is at least one  {\it HIS} model. For a given design, Theorem \ref{hier} provides a straightforward procedure for identifying a {\it HIS} model. The model is constructed monomial 
by monomial, with monomials of degree \(q\) included before those of degree \(q+1\). At each monomial degree, input from those with expert knowledge of the process being 
modelled can be used to order the monomials according to their perceived relevance. Using the
construction method in the proof of Theorem~\ref{hier}, it may be possible to generate multiple {\it HIS} models for a given design.  
The set of {\it HIS} models supported by \(\mathcal{D}\) is referred to as the
{\it statistical fan} of \(\mathcal{D}\), with individual models in the fan called {\it leaves}.
As will be demonstrated by consideration of \(\mathcal{D}1\), the approach of Theorem~\ref{hier} does not always obtain every {\it HIS} model. 

\begin{example}[continues=thm:key]
Using the approach of Theorem \ref{hier}, a number of {\it HIS} models that are supported by \(\mathcal{D}1\) can be found by constructing bases for \(\mathbb{R}^5\)
comprising monomial vectors. Each basis is formed vector by vector, starting with \(S=\{\mathbf{1}\}\). For Step 1, set
\(T_1=\{ \mathbf{x}^{\vphantom{2}}_1,\mathbf{x}^{\vphantom{2}}_2,\mathbf{x}^{\vphantom{2}}_3 \}\). Since the \(5 \times 4\) matrix \((\mathbf{1} \,\, \mathbf{x}^{\vphantom{2}}_1 \,\, \mathbf{x}^{\vphantom{2}}_2 \,\,
\mathbf{x}^{\vphantom{2}}_3 )\) has rank $4$, at the end of Step 1 we have \(S=\{\mathbf{1}, \, \mathbf{x}^{\vphantom{2}}_1, \, \mathbf{x}^{\vphantom{2}}_2, \,
\mathbf{x}^{\vphantom{2}}_3 \}\), whichever order the terms of \(T_1\) are taken in. For Step 2, take \( T_2 = \{ \mathbf{x}_1^2, \mathbf{x}_2^2,
\mathbf{x}_3^2, \mathbf{x}^{\vphantom{2}}_1 \mathbf{x}^{\vphantom{2}}_2, \mathbf{x}^{\vphantom{2}}_1 \mathbf{x}^{\vphantom{2}}_3, \mathbf{x}^{\vphantom{2}}_2 \mathbf{x}^{\vphantom{2}}_3 \}\). Of the vectors in \(T_2,\) \( \mathbf{x}_3^2, \mathbf{x}^{\vphantom{2}}_1 \mathbf{x}^{\vphantom{2}}_2,
\mathbf{x}^{\vphantom{2}}_1 \mathbf{x}^{\vphantom{2}}_3, \mathbf{x}^{\vphantom{2}}_2 \mathbf{x}^{\vphantom{2}}_3 \) are each linearly independent of those in \(S\) and any one can be added to 
\(S\), giving rise to four {\it HIS} models. For example, one such model contains the terms: $1, x^{\vphantom{2}}_1, x^{\vphantom{2}}_2, x^{\vphantom{2}}_3, x_3^2$. However, the {\it HIS} model with terms: \(1, x^{\vphantom{2}}_1, x_1^2, x^{\vphantom{2}}_3,
x^{\vphantom{2}}_1 x^{\vphantom{2}}_3,\) is supported by \(\mathcal{D}1\), but cannot be  obtained directly via the method of Theorem~\ref{hier}. We return to this case in the next Section. 
\end{example}

\section{Conditions on Monomials contained in {\it HI} and {\it HIS} Models}\label{s3}

Theorem \ref{neci} gives a necessary condition on a monomial for its inclusion in
a {\it HI} model under a given design \(\mathcal{D}\). In this section, we first consider subsets of indeterminates, finding further necessary and/or sufficient 
conditions on monomials 
that can be contained in {\it HI} and {\it HIS} models
under \(\mathcal{D}\). We then establish a sufficient condition for a monomial to be included in every leaf of the statistical fan of a design $\mathcal{D}$. 

Let \(U\) be a subset of \(l<r\) indeterminates. Without loss of generality, let these be \(x^{\vphantom{2}}_1, \ldots, x^{\vphantom{2}}_l\). Form \(X_{\mathcal{D}U}\), a \(n^{\vphantom{2}}_U \times l\) matrix, from \(X_{\mathcal{D}}\), by deleting columns
corresponding to indeterminates not in \(U\) and, if necessary, deleting any repeats of rows, so
that all rows are distinct. Let \(\mathcal{D}_U\) denote the design with design matrix \(X_{\mathcal{D}U}\). 
The {\it HI} models for \(\mathcal{D}_{U}\) will be precisely the {\it HI} models for \(\mathcal{D}\) in \(x^{\vphantom{2}}_1, \ldots, x^{\vphantom{2}}_l\)
only, that is, {\it HI} models in which all monomials have zero exponents for \(x^{\vphantom{2}}_{l+1}, \ldots, x^{\vphantom{2}}_r\).
The following result is obtained using an approach which parallels that leading to Theorem \ref{hier} and is given without
proof.
\begin{lemma}\label{th:idsub}
For the design \(\mathcal{D}\) and indeterminate set \(U=\{x^{\vphantom{2}}_1, \ldots, x^{\vphantom{2}}_l\}\), where \(l <r\), let the order of \(X_{\mathcal{D}U}\) be \(n^{\vphantom{2}}_U \times l\). Then \(\mathcal{D}\) supports at least one {\it HI} model involving \(n^{\vphantom{2}}_U\) monomials in
indeterminates in \(U\) only, but supports no {\it HI} models involving more than \(n^{\vphantom{2}}_U\) monomials in
indeterminates in \(U\) only.
\end{lemma}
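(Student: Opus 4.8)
The plan is to reduce the study of \emph{HI} models of \(\mathcal{D}\) whose monomials lie in \(U\) to the study of \emph{HI} models of the smaller design \(\mathcal{D}_U\), and then to invoke Proposition~\ref{van} and Theorem~\ref{hier} applied to \(\mathcal{D}_U\) directly; in this sense the argument parallels the construction used for Theorem~\ref{hier}. The essential observation is that, for a monomial \(\alpha\) with \(\alpha_{l+1} = \cdots = \alpha_r = 0\), the entries of the monomial vector \(\mathbf{x}^{\alpha}\) depend only on the values of \(x^{\vphantom{2}}_1, \ldots, x^{\vphantom{2}}_l\) at the design points. Consequently, any two points of \(\mathcal{D}\) that agree in their first \(l\) coordinates contribute identical entries to \(\mathbf{x}^{\alpha}\), and hence identical rows to the model matrix of any model \(\mathcal{M}\) whose monomials all lie in \(U\).

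First I would make this observation precise. Given such a model \(\mathcal{M}\) with \(p\) monomials, the \(n \times p\) matrix \(W_{\mathcal{D}\mathcal{M}}\) is obtained from the \(n^{\vphantom{2}}_U \times p\) matrix \(W_{\mathcal{D}_U\mathcal{M}}\) by repeating rows, according to the multiplicity with which each distinct \(U\)-row occurs among the points of \(\mathcal{D}\). Repeating rows does not alter the rank, so \(\mathrm{rank}(W_{\mathcal{D}\mathcal{M}}) = \mathrm{rank}(W_{\mathcal{D}_U\mathcal{M}})\). By Definition~\ref{ide} it follows that \(\mathcal{M}\) is identifiable from \(\mathcal{D}\) if and only if it is identifiable from \(\mathcal{D}_U\); since the hierarchy condition is unaffected, the \emph{HI} models of \(\mathcal{D}\) in \(U\) only coincide exactly with the \emph{HI} models of \(\mathcal{D}_U\).

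For the existence claim I would apply Theorem~\ref{hier} to the design \(\mathcal{D}_U\), which has \(n^{\vphantom{2}}_U\) distinct points. This yields a \emph{HIS} model of \(\mathcal{D}_U\), which necessarily contains exactly \(n^{\vphantom{2}}_U\) monomials, all in the indeterminates of \(U\). By the equivalence just established, this is an \emph{HI} model of \(\mathcal{D}\) involving \(n^{\vphantom{2}}_U\) monomials in \(U\) only. For the maximality claim, suppose \(\mathcal{M}\) were an \emph{HI} model of \(\mathcal{D}\) with \(p > n^{\vphantom{2}}_U\) monomials in \(U\) only. Identifiability would force \(\mathrm{rank}(W_{\mathcal{D}\mathcal{M}}) = p\), yet \(\mathrm{rank}(W_{\mathcal{D}\mathcal{M}}) = \mathrm{rank}(W_{\mathcal{D}_U\mathcal{M}}) \le n^{\vphantom{2}}_U < p\), a contradiction; equivalently, the \(p\) monomial vectors lie in \(\mathbb{R}^{n_U}\) and cannot be linearly independent once \(p > n^{\vphantom{2}}_U\).

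I expect the main obstacle to be the reduction step itself: one must argue carefully that passing to \(X_{\mathcal{D}U}\) by deleting repeated rows leaves both the rank and the identifiability of any \(U\)-model unchanged, and that the monomials under consideration genuinely depend only on the \(U\)-coordinates. Once this correspondence between the \(n\)-point design \(\mathcal{D}\) and the \(n^{\vphantom{2}}_U\)-point design \(\mathcal{D}_U\) is established, both halves of the statement follow at once from the \(n^{\vphantom{2}}_U\)-point versions of Proposition~\ref{van} and Theorem~\ref{hier}.
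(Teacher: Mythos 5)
Your proposal is correct and matches the paper's intended argument: the paper states this lemma \emph{without proof}, noting only that it follows by an approach paralleling Theorem~\ref{hier} after the reduction to the design $\mathcal{D}_U$ set up in the preceding paragraph (where it asserts that the \textit{HI} models for $\mathcal{D}_U$ are precisely the \textit{HI} models for $\mathcal{D}$ in indeterminates of $U$ only). Your rank-preservation-under-repeated-rows observation, the application of Theorem~\ref{hier} to $\mathcal{D}_U$ for existence, and the rank bound $\mathrm{rank}(W_{\mathcal{D}\mathcal{M}}) \le n_U$ for maximality supply exactly the details the paper omits.
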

Lemma \ref{th:idsub} gives an achievable upper bound of \(n^{\vphantom{2}}_U\) for the number of monomials (including 1) contained in a {\it HI} model, supported by \(\mathcal{D}\), for which each monomial has \(\alpha^{\vphantom{2}}_{l+1}=\cdots=\alpha^{\vphantom{2}}_r=0\). The result can clearly be generalised to cover all subsets of \(l<r\) indeterminates. We demonstrate the result by consideration of \(\mathcal{D}1\).

\begin{example}[continues=thm:key]
Consider the set \(U1=\{x^{\vphantom{2}}_1,x^{\vphantom{2}}_2\}\). From the first two columns of  \(X_{\mathcal{D}1}\) given in \eqref{desmat1}, the design matrix \(X_{\mathcal{D}1U1}\) is: 

\begin{equation}\label{desmat1u}
X_{\mathcal{D}1U1}=\begin{bmatrix} ~1&-1\\
-1&-1\\
~0&~1\\
\end{bmatrix},
\end{equation}
giving \(n^{\vphantom{2}}_{U1}=3\). By Lemma \ref{th:idsub}, there is at least one {\it HI} model in three monomials involving only \(x^{\vphantom{2}}_1\) and \(x^{\vphantom{2}}_2\) which is supported by
\(\mathcal{D}1\). From earlier investigation of Models I and II, it is
known, or deduced, that models with monomial sets \(\{1, x^{\vphantom{2}}_1, x_1^2\}\) and \(\{1, x^{\vphantom{2}}_1, x^{\vphantom{2}}_2\}\) are {\it HI} under \(\mathcal{D}1\). However, also by Lemma~\ref{th:idsub}, models with monomial sets with four or more monomials in \(x^{\vphantom{2}}_1\)
and \(x^{\vphantom{2}}_2\), such as \(\{1, x^{\vphantom{2}}_1, x^{\vphantom{2}}_2, x^{\vphantom{2}}_1x^{\vphantom{2}}_2\}\) and \(\{1, x^{\vphantom{2}}_1, x^{\vphantom{2}}_2, x_2^2\}\), are~\emph{not\/} identifiable under \(\mathcal{D}1\). This is confirmed by calculating the
ranks of the corresponding \(5 \times 4\) model matrices. 
\end{example}

Lemma \ref{th:idsub} has a number of important applications. For a subset comprising a single indeterminate, that is \(U=\{x^{\vphantom{2}}_j\}\), there is only one
possible {\it HI} model with \(n^{\vphantom{2}}_U=n^{\vphantom{2}}_j\) terms, namely that with terms \(1, x^{\vphantom{2}}_j, \ldots, x_j^{n_j-1}\). Hence:

\begin{corollary}\label{idonecor}
For any indeterminate, \(x^{\vphantom{2}}_j\), design \(\mathcal{D}\) supports the {\it HI} model with monomial set \( \left\{ 1, x^{\vphantom{2}}_j, \ldots, x_j^{n_j-1} \right\}\).
\end{corollary}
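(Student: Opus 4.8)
The plan is to specialise Lemma~\ref{th:idsub} to the singleton set $U = \{x^{\vphantom{2}}_j\}$ and then close the gap with a uniqueness argument. First I would record that, since $x^{\vphantom{2}}_j$ assumes $n^{\vphantom{2}}_j$ distinct values across the points of $\mathcal{D}$, the reduced matrix $X_{\mathcal{D}U}$ is $n^{\vphantom{2}}_j \times 1$ with all rows distinct, so that $n^{\vphantom{2}}_U = n^{\vphantom{2}}_j$. Lemma~\ref{th:idsub} then guarantees that $\mathcal{D}$ supports at least one \emph{HI} model consisting of $n^{\vphantom{2}}_j$ monomials whose exponents vanish on every indeterminate except $x^{\vphantom{2}}_j$.

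Next I would pin down the form of any such model. A monomial in $x^{\vphantom{2}}_j$ alone is of the form $x_j^m$ for some $m \in \mathbb{Z}_{\ge 0}$, and by Definition~\ref{hi} its constituent monomials within this univariate family are precisely $1, x^{\vphantom{2}}_j, \ldots, x_j^m$. Hence any hierarchical model whose monomials involve $x^{\vphantom{2}}_j$ only, and whose largest power of $x^{\vphantom{2}}_j$ is $m$, must contain exactly the $m+1$ monomials $\{1, x^{\vphantom{2}}_j, \ldots, x_j^m\}$. Requiring this count to equal $n^{\vphantom{2}}_j$ forces $m = n^{\vphantom{2}}_j - 1$, so the \emph{only} hierarchical model in $x^{\vphantom{2}}_j$ alone with $n^{\vphantom{2}}_j$ terms is $\{1, x^{\vphantom{2}}_j, \ldots, x_j^{n_j-1}\}$.

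Combining the two observations finishes the argument: the \emph{HI} model supplied by Lemma~\ref{th:idsub} must coincide with $\{1, x^{\vphantom{2}}_j, \ldots, x_j^{n_j-1}\}$, which is therefore supported by $\mathcal{D}$. I do not anticipate a genuine obstacle, since the heavy lifting is already done by Lemma~\ref{th:idsub}; the one point that needs care is the counting step, namely confirming that the hierarchical constraint forces the powers of $x^{\vphantom{2}}_j$ to be consecutive and that $n^{\vphantom{2}}_U = n^{\vphantom{2}}_j$ exactly. As an optional cross-check, one could instead verify identifiability directly: evaluating $1, x^{\vphantom{2}}_j, \ldots, x_j^{n_j-1}$ at the $n^{\vphantom{2}}_j$ distinct values of $x^{\vphantom{2}}_j$ yields a Vandermonde matrix of rank $n^{\vphantom{2}}_j$, whence the corresponding $n \times n^{\vphantom{2}}_j$ model matrix $W_{\mathcal{DM}}$ has rank $n^{\vphantom{2}}_j$ and the model is identifiable.
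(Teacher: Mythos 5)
Your proposal is correct and follows the paper's own route: specialise Lemma~\ref{th:idsub} to the singleton $U = \{x^{\vphantom{2}}_j\}$, note that $n^{\vphantom{2}}_U = n^{\vphantom{2}}_j$, and observe that the hierarchical constraint forces the unique $n^{\vphantom{2}}_j$-term model in $x^{\vphantom{2}}_j$ alone to be $\{1, x^{\vphantom{2}}_j, \ldots, x_j^{n_j-1}\}$. You merely spell out the uniqueness count that the paper states in one line, and your Vandermonde cross-check is a harmless extra.
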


If a proper subset of $l < r$ indeterminates \(U\) with \(n^{\vphantom{2}}_U=n\) exists, then, by Lemma~\ref{th:idsub}, \( \mathcal{D} \) supports a {\it HI} model involving \(n\) monomials in \(U\) only. Such a model is a {\it HIS} model by definition and signifies the existence of a {\it HIS} model in the $l$ indeterminates in $U$:

\begin{corollary}\label{newc}
If there exists a proper subset of indeterminates \(U\) with \(n^{\vphantom{2}}_U=n\), then $\mathcal{D}$ supports a  {\it HIS} model involving monomials with indeterminates only from \(U\).
\end{corollary}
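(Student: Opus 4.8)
The plan is to derive the result directly from Lemma~\ref{th:idsub}, since the hypothesis is tailored precisely to the count appearing there. First I would apply Lemma~\ref{th:idsub} to the given proper subset \(U\), whose associated matrix \(X_{\mathcal{D}U}\) has order \(n^{\vphantom{2}}_U \times l\) with \(l < r\). The lemma then guarantees that \(\mathcal{D}\) supports at least one {\it HI} model all of whose monomials have zero exponents for the indeterminates outside \(U\), and that this model involves \(n^{\vphantom{2}}_U\) monomials.

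Next I would invoke the hypothesis \(n^{\vphantom{2}}_U = n\) to conclude that this {\it HI} model has exactly \(p = n\) monomials. By Definition~\ref{ide}, any model that is identifiable from \(\mathcal{D}\) and satisfies \(p = n\) is saturated; since the model supplied by Lemma~\ref{th:idsub} is hierarchical and identifiable, it is therefore a {\it HIS} model. By construction its monomials involve only indeterminates from \(U\), which is exactly what is asserted.

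There is really no substantive obstacle here: the entire content is carried by Lemma~\ref{th:idsub}, and the only additional observation needed is the elementary one that an {\it HI} model with \(p = n\) meets the saturation condition of Definition~\ref{ide}. If anything merits a word of care, it is confirming that \(U\) being a proper subset (\(l < r\)) is genuinely what licenses the application of Lemma~\ref{th:idsub}, and that it is the equality \(n^{\vphantom{2}}_U = n\) --- rather than merely \(n^{\vphantom{2}}_U \le n\) --- that upgrades the identifiable model to a saturated one.
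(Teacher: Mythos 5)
Your proposal is correct and is essentially the paper's own argument: the paper likewise applies Lemma~\ref{th:idsub} to the proper subset \(U\) to obtain a \textit{HI} model in \(n^{\vphantom{2}}_U = n\) monomials involving only indeterminates from \(U\), and then notes that such a model is saturated by Definition~\ref{ide}. No difference in approach or gap to report.
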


We demonstrate the result of Corollary \ref{newc} by reference to the design \(\mathcal{D}1\) of Example \ref{example:1}.

\begin{example}[continues=thm:key]
For \(\mathcal{D}1\), consider \(U2=\{x^{\vphantom{2}}_1, x^{\vphantom{2}}_3\}\). This set has \(n^{\vphantom{2}}_{U2}=5=n\). By Corollary \ref{newc} it follows that a {\it HIS} model exists in \(x^{\vphantom{2}}_1, x^{\vphantom{2}}_3\). Using the approach of Theorem \ref{hier} on the design matrix \(X_{\mathcal{D}1U2}\) yields the {\it HIS} model noted earlier, i.e. the model with terms: \(1, x^{\vphantom{2}}_1, x_1^2, x^{\vphantom{2}}_3,
x^{\vphantom{2}}_1 x^{\vphantom{2}}_3\).
\end{example}

In general, even a small design can result in a statistical fan with a large number of leaves, i.e. {\it HIS} models. Within the set 
of {\it HIS} models there can be a wide range in the indeterminates and monomials included. This allows scope for the 
practitioner to select models that seem appropriate, given knowledge of the experimental situation, for further 
investigation. In practice, we would expect a practitioner to consider the inclusion of monomials in increasing degree. For 
monomials of the same degree, inclusion would be considered in order of perceived importance. Thus, for a given design there would be no need to obtain all {\it HIS} models. 
However, it is illuminating to demonstrate the number and scope of {\it HIS} models generated by Theorem \ref{hier} and Corollary \ref{newc} from a design with \(n=9\). 

\begin{example}[label=thm:key2]
\label{example:4}
\citet{JonesNachtsheim} propose a class of screening designs for indeterminates having \(n^{\vphantom{2}}_i=3\), for \(i=1,\ldots,r\), and with \(n=2r+1\). We consider a design, \(\mathcal{D}2\), with \(r=4\) from this class, i.e.
on indeterminates \(x^{\vphantom{2}}_1,x^{\vphantom{2}}_2,x^{\vphantom{2}}_3,x^{\vphantom{2}}_4\). The design matrix is:
\begin{equation}\label{desmat2}
X_{\mathcal{D}2}=\begin{bmatrix} ~0&~1&-1&-1\\
-1&~0&-1&~1\\
-1&-1&~0&-1\\
-1&~1&~1&~0\\
~0&~0&~0&~0\\
~0&-1&~1&~1\\
~1&~0&~1&-1\\
~1&~1&~0&~1\\
~1&-1&-1&~0\\
\end{bmatrix}.
\end{equation}
The approach of Theorem \ref{hier} is used to investigate the {\it HIS} models supported by \(\mathcal{D}2\). The \(9 \times 5\) matrix \( [ \mathbf{1} \,\, \mathbf{x}^{\vphantom{2}}_1 \,\, \mathbf{x}^{\vphantom{2}}_2 \,\,
\mathbf{x}^{\vphantom{2}}_3\,\, \mathbf{x}^{\vphantom{2}}_4 ] \) has rank $5$. Thus, at the end of Step 1 we have \(S=\{\mathbf{1}, \, \mathbf{x}^{\vphantom{2}}_1, \, \mathbf{x}^{\vphantom{2}}_2, \,
\mathbf{x}^{\vphantom{2}}_3, \, \mathbf{x}^{\vphantom{2}}_4\}\). For Step 2, take \( T_2 = \{ \mathbf{x}_1^2, \mathbf{x}_2^2,
\mathbf{x}_3^2, \mathbf{x}_4^2, \mathbf{x}^{\vphantom{2}}_1 \mathbf{x}^{\vphantom{2}}_2, \mathbf{x}^{\vphantom{2}}_1 \mathbf{x}^{\vphantom{2}}_3, \mathbf{x}^{\vphantom{2}}_1 \mathbf{x}^{\vphantom{2}}_4, \mathbf{x}^{\vphantom{2}}_2 \mathbf{x}^{\vphantom{2}}_3, \mathbf{x}^{\vphantom{2}}_2 \mathbf{x}^{\vphantom{2}}_4, \mathbf{x}^{\vphantom{2}}_3 \mathbf{x}^{\vphantom{2}}_4 \}\). A search through the \(10!/6!4!\) sets of four members of \(T_2\) yields 125 {\it HIS} models. 
Further, any subset, \(U3\) say, of three indeterminates has \(n^{\vphantom{2}}_{U3}=9\). By Corollary \ref{newc}, every such subset yields at least one {\it HIS} model. Investigation of the subsets of three indeterminates, again via the approach of Theorem \ref{hier}, yields 30 {\it HIS} models in each case, giving 120 {\it HIS} models in three indeterminates. 
Similarly, any subset, \(U2\) say, of two indeterminates has \(n^{\vphantom{2}}_{U2}=9\), containing all 9 combinations of the two indeterminates and gives rise to a single {\it HIS} model. Thus, the sets of two indeterminates  yield, in total, $6$ {\it HIS} models. Finally, any subset, $U1$ say, containing a single indeterminate has $n_{U1} = 3$, and does not give rise to any {\it HIS} model supported by $\mathcal{D}2$. Therefore, in total, our methods 
give $251$ distinct {\it HIS} models that are supported by  \(\mathcal{D}2\). We return to this example in \S\ref{sgb}.
\end{example}

Application of Lemma \ref{th:idsub} to all subsets of indeterminates gives the following result which generalises Theorem \ref{neci}.
\begin{theorem}\label{subset}
A necessary condition for the monomial \(\alpha=(\alpha^{\vphantom{2}}_{1}, \ldots, \alpha^{\vphantom{2}}_{r})\), to be included in a {\it HI} model is \( \Pi_{i\in U}(\alpha^{\vphantom{2}}_i+1) \le n^{\vphantom{2}}_U\) for every subset of indeterminates \(U\).
\end{theorem}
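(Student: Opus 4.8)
The plan is to reduce the claim, one subset at a time, to the counting bound supplied by Lemma~\ref{th:idsub}. Fix a \emph{HI} model $\mathcal{M}$ supported by $\mathcal{D}$ that contains the monomial $\alpha=(\alpha^{\vphantom{2}}_1,\ldots,\alpha^{\vphantom{2}}_r)$, and fix an arbitrary subset of indeterminates $U$; relabelling if necessary, take $U=\{x^{\vphantom{2}}_1,\ldots,x^{\vphantom{2}}_l\}$. First I would isolate the constituent monomials of $\alpha$ whose support lies entirely in $U$, namely those $\beta=(\beta^{\vphantom{2}}_1,\ldots,\beta^{\vphantom{2}}_r)$ with $\beta^{\vphantom{2}}_i\le\alpha^{\vphantom{2}}_i$ for $i\in U$ and $\beta^{\vphantom{2}}_i=0$ for $i\notin U$. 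Since each $\beta^{\vphantom{2}}_i$ with $i\in U$ independently takes one of the $\alpha^{\vphantom{2}}_i+1$ values $0,1,\ldots,\alpha^{\vphantom{2}}_i$, there are exactly $\Pi_{i\in U}(\alpha^{\vphantom{2}}_i+1)$ such monomials.

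The crux of the argument is that these monomials constitute a \emph{HI} model in the indeterminates of $U$ alone. I would verify hierarchy directly from Definition~\ref{hi}: any constituent $\gamma$ of such a $\beta$ satisfies $\gamma^{\vphantom{2}}_i\le\beta^{\vphantom{2}}_i\le\alpha^{\vphantom{2}}_i$ for $i\in U$ and $\gamma^{\vphantom{2}}_i\le\beta^{\vphantom{2}}_i=0$ for $i\notin U$, so $\gamma$ again lies in the isolated set. Identifiability is then inherited from $\mathcal{M}$: the isolated constituents all belong to $\mathcal{M}$ by hierarchy, and since $\mathcal{M}$ is identifiable its model matrix has full column rank, so the submatrix formed by the columns indexed by the isolated constituents still has linearly independent columns. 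Hence the isolated set is a \emph{HI} model supported by $\mathcal{D}$ involving $\Pi_{i\in U}(\alpha^{\vphantom{2}}_i+1)$ monomials, all in indeterminates from $U$ only.

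With this in hand, I would simply invoke the appropriate upper bound. For a proper subset $U$ (that is, $l<r$), Lemma~\ref{th:idsub} asserts that $\mathcal{D}$ supports no \emph{HI} model with more than $n^{\vphantom{2}}_U$ monomials in indeterminates from $U$ only, forcing $\Pi_{i\in U}(\alpha^{\vphantom{2}}_i+1)\le n^{\vphantom{2}}_U$. For the remaining case $U=\{x^{\vphantom{2}}_1,\ldots,x^{\vphantom{2}}_r\}$, where $n^{\vphantom{2}}_U=n$, the same inequality is precisely the conclusion of Theorem~\ref{neci}. Since $U$ was arbitrary, the stated necessary condition holds for every subset of indeterminates.

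I do not anticipate a genuine obstacle, as the heavy lifting is done by Lemma~\ref{th:idsub}; the only points requiring care are the verification that passing to the constituents supported in $U$ genuinely yields a \emph{HI} model (closure under taking constituents, together with the inheritance of full column rank when columns are deleted), and the correct count of $\Pi_{i\in U}(\alpha^{\vphantom{2}}_i+1)$ such constituents.
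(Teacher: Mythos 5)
Your proposal is correct and follows essentially the same route as the paper: both isolate the $\Pi_{i\in U}(\alpha^{\vphantom{2}}_i+1)$ constituent monomials of $\alpha$ supported in $U$, observe that they form a \emph{HI} model supported by $\mathcal{D}$, and then invoke the upper bound of Lemma~\ref{th:idsub}. Your treatment is in fact slightly more careful than the paper's, since you verify the hierarchy and rank-inheritance claims explicitly and handle the case $U=\{x^{\vphantom{2}}_1,\ldots,x^{\vphantom{2}}_r\}$ separately via Theorem~\ref{neci}, which Lemma~\ref{th:idsub} (stated only for $l<r$) does not literally cover.
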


\begin{proof} 
Consider any subset \(U\) of \(l\) indeterminates. If the monomial \(\alpha=(\alpha^{\vphantom{2}}_{1},
\ldots, \alpha^{\vphantom{2}}_{r})\) is contained in a {\it HI} model supported by \(\mathcal{D}\) then the monomial  
\(\beta=(\beta^{\vphantom{2}}_{1}, \ldots, \beta^{\vphantom{2}}_{r})\) must also be contained in the model, where \(\beta^{\vphantom{2}}_i=\alpha^{\vphantom{2}}_i\) if \(x^{\vphantom{2}}_i \in U\)
and otherwise \(\beta^{\vphantom{2}}_i=0\). Similarly, all monomials which are constituent to \(\beta\) will be in the
model.  There are \( \Pi_{i\in U}(\alpha^{\vphantom{2}}_i+1)\) such monomials. If these monomials are contained in a {\it HI} model supported
by \(\mathcal{D}\), then the model comprising just these \( \Pi_{i\in U}(\alpha^{\vphantom{2}}_i+1)\)  monomials will also be {\it HI} for \(\mathcal{D}\). Hence, by Lemma
\ref{th:idsub}, \( \Pi_{i\in U}(\alpha^{\vphantom{2}}_i+1) \le n^{\vphantom{2}}_U.\) This holds for every \(U\) and the result follows.
\end{proof}

Theorem \ref{subset} applies to any subset of \(l\) indeterminates. 
Applied to the set \(U=\{x^{\vphantom{2}}_j\}\) gives the following result, which complements Corollary~\ref{idonecor}:

\begin{corollary}\label{xyz}
A necessary condition for the monomial \(\alpha=(\alpha^{\vphantom{2}}_{1}, \ldots, \alpha^{\vphantom{2}}_{r})\), to be included in a {\it HI} model is
\(\alpha^{\vphantom{2}}_{j}\le n^{\vphantom{2}}_j-1\) for \(j=1,\ldots, r\).
\end{corollary}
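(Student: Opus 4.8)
The plan is to obtain this as a direct specialisation of Theorem~\ref{subset} to singleton subsets of indeterminates. The key observation is that Theorem~\ref{subset} already provides the bound \(\Pi_{i\in U}(\alpha^{\vphantom{2}}_i+1) \le n^{\vphantom{2}}_U\) for \emph{every} subset \(U\), so it suffices to read off what this inequality says when \(U\) consists of a single indeterminate.

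First I would fix an index \(j \in \{1, \ldots, r\}\) and apply Theorem~\ref{subset} with \(U = \{x^{\vphantom{2}}_j\}\). Since \(U\) now has a single element, the product \(\Pi_{i\in U}(\alpha^{\vphantom{2}}_i+1)\) collapses to the single factor \(\alpha^{\vphantom{2}}_j + 1\), so the necessary condition becomes \(\alpha^{\vphantom{2}}_j + 1 \le n^{\vphantom{2}}_U\). Next I would identify \(n^{\vphantom{2}}_U\) for this singleton: by the construction in \S\ref{s3}, \(X_{\mathcal{D}U}\) is formed from \(X_{\mathcal{D}}\) by keeping only the column \(\mathbf{x}^{\vphantom{2}}_j\) and deleting repeated rows, so its number of distinct rows equals the number of distinct values taken by the \(j\)th indeterminate, which is \(n^{\vphantom{2}}_j\) by the definition of a design in \S\ref{sp}. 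Hence \(n^{\vphantom{2}}_U = n^{\vphantom{2}}_j\), and the inequality reads \(\alpha^{\vphantom{2}}_j + 1 \le n^{\vphantom{2}}_j\), equivalently \(\alpha^{\vphantom{2}}_j \le n^{\vphantom{2}}_j - 1\). Since \(j\) was arbitrary and Theorem~\ref{subset} guarantees the bound holds simultaneously for every subset, this establishes \(\alpha^{\vphantom{2}}_j \le n^{\vphantom{2}}_j - 1\) for all \(j = 1, \ldots, r\), as claimed.

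There is no substantial obstacle here: the argument is a one-line reduction once Theorem~\ref{subset} is in hand. The only point meriting a moment's care—and it is a verification rather than a difficulty—is the identification \(n^{\vphantom{2}}_{\{x_j\}} = n^{\vphantom{2}}_j\), which follows immediately from the row-deletion step in the construction of \(X_{\mathcal{D}U}\) together with the definition of \(n^{\vphantom{2}}_j\) as the number of distinct values of \(x^{\vphantom{2}}_j\). This also dovetails neatly with Corollary~\ref{idonecor}, which exhibits the \emph{HI} model \(\{1, x^{\vphantom{2}}_j, \ldots, x_j^{n_j-1}\}\) attaining the bound, confirming that the condition \(\alpha^{\vphantom{2}}_j \le n^{\vphantom{2}}_j - 1\) is sharp for each single indeterminate.
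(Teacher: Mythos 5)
Your proof is correct and follows exactly the paper's route: the paper derives this corollary by applying Theorem~\ref{subset} to the singleton set \(U=\{x^{\vphantom{2}}_j\}\), where \(n^{\vphantom{2}}_U = n^{\vphantom{2}}_j\) collapses the product to \(\alpha^{\vphantom{2}}_j + 1 \le n^{\vphantom{2}}_j\). Your additional verification that \(n^{\vphantom{2}}_{\{x_j\}} = n^{\vphantom{2}}_j\) and the remark on sharpness via Corollary~\ref{idonecor} are sound but not needed beyond what the paper's one-line deduction already contains.
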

For a given design, Theorem \ref{subset} gives information on subsets of monomials that can occur together in {\it HI} models. It also provides guidance on design construction.

\begin{example}[continues=thm:key]
Consider the set \(U=\{x^{\vphantom{2}}_2,x^{\vphantom{2}}_3\}\) which has \(n^{\vphantom{2}}_U=5\). By Lemma \ref{th:idsub}, \(\mathcal{D}\) supports a \(HI\) model in 5 monomials all with \(\alpha^{\vphantom{2}}_1=0\). Further, since
\(n^{\vphantom{2}}_2=2\) and \(n^{\vphantom{2}}_3=3\), by Corollary \ref{xyz}, \(\alpha^{\vphantom{2}}_2 \le 1\) and \(\alpha^{\vphantom{2}}_3 \le 2\) for all monomials \(\alpha=(\alpha^{\vphantom{2}}_1,\alpha^{\vphantom{2}}_2,\alpha^{\vphantom{2}}_3)\) in a {\it HI} model. There is exactly one set of five
monomials that satisfy all three conditions on the \(\alpha^{\vphantom{2}}_i\), namely \(\{1, x^{\vphantom{2}}_2, x^{\vphantom{2}}_3, x^{\vphantom{2}}_2 x^{\vphantom{2}}_3,x_3^2\}\) and therefore  \(\mathcal{D}\) supports a {\it HIS} model in these terms.

\end{example}
This deduction from Example~\ref{example:1} would apply to any design in which a set \(U=\{x^{\vphantom{2}}_i,x^{\vphantom{2}}_j\}\) had \(n^{\vphantom{2}}_i=2\), \(n^{\vphantom{2}}_j=3\) and \(n^{\vphantom{2}}_U=5\). This points towards how Theorem \ref{subset} can be used to aid in design planning, as demonstrated with the following example. 

\begin{example}
\label{example:2}
An experiment is planned with two indeterminates: \(x^{\vphantom{2}}_1\) and \(x^{\vphantom{2}}_2\), with \(n^{\vphantom{2}}_1=4\) and \(n^{\vphantom{2}}_2=3\). A design with \(n=12\) would include a run for each combination of \(x^{\vphantom{2}}_1\) and \(x^{\vphantom{2}}_2\). By Theorem \ref{hier} and Corollary \ref{xyz}, such a design would yield the only {\it HIS}
model in 12 terms, i.e. the model containing all monomials \(\alpha=(\alpha^{\vphantom{2}}_1,\alpha^{\vphantom{2}}_2)\) with \(\alpha^{\vphantom{2}}_1 \le 3\) and \(\alpha^{\vphantom{2}}_2 \le 2\). 

It is interesting and illuminating to consider the case where resources do not accommodate as many as 12 runs, or where the factors are {\it difficult to change} so \(n<12\) but some runs may be repeated. A multiplication table between the monomials in \(x^{\vphantom{2}}_1\) and those in \(x^{\vphantom{2}}_2\) follows:
\begin{equation}\label{mtab}
\begin{tabular}
{c|ccc}
\(\times\) & 1 & \(x^{\vphantom{2}}_2\) & \(x_2^2\)\\[1mm] \hline \\[-3.3mm]
1 & 1 & \(x^{\vphantom{2}}_2\) & \(x_2^2\)\\[0.6mm]
\(x^{\vphantom{2}}_1\) & \(x^{\vphantom{2}}_1\) & \(x^{\vphantom{2}}_1x^{\vphantom{2}}_2\) & \(x^{\vphantom{2}}_1x_2^2\)\\[0.8mm]  
\(x_1^2\) & \(x_1^2\) & \(x_1^2x^{\vphantom{2}}_2\) & \(x_1^2x_2^2\)\\[0.8mm]
\(x_1^3\) & \(x_1^3\) & \(x_1^3x^{\vphantom{2}}_2\) & \(x_1^3x_2^2\)\\[0.5mm]
\end{tabular}
\end{equation}
With this arrangement, the constituent monomials for any selected
monomial are exactly the monomial itself together with any which are to the left and/or above. For example, \(x_1^2x^{\vphantom{2}}_2\) has 6 constituent monomials: \(x_1^2x^{\vphantom{2}}_2, 1,x^{\vphantom{2}}_2, x^{\vphantom{2}}_1, x^{\vphantom{2}}_1x^{\vphantom{2}}_2, x_1^2\). This is consistent with the number of constituent monomials for \(\alpha=(\alpha^{\vphantom{2}}_1,\alpha^{\vphantom{2}}_2)\) being \((\alpha^{\vphantom{2}}_1+1)(\alpha^{\vphantom{2}}_2+1)\). 

Consider a design with 8 runs. From Theorem \ref{hier} a {\it HIS} model exists. Inspection of the multiplication table in \eqref{mtab} reveals that there are 4 possible hierarchical models of 8 terms. The four monomial configurations are displayed in Table~\ref{tab8run}. 
For example, the top-left configuration corresponds to a {\it HIS} model with monomial set $\left\{ 1, x^{\vphantom{2}}_2, x_2^2, x^{\vphantom{2}}_1, x^{\vphantom{2}}_1 x^{\vphantom{2}}_2, x^{\vphantom{2}}_1 x_2^2, x_1^2, x_1^2 x^{\vphantom{2}}_2 \right\}$. 
For a given design with \(n=8\), the leaves of the statistical fan will include at least one of
these models. It is notable that the five monomials: \(1, x^{\vphantom{2}}_1, x^{\vphantom{2}}_2, x_1^2, x^{\vphantom{2}}_1x^{\vphantom{2}}_2\) are guaranteed to occur in every {\it HIS} model with \(n=8\). There are \(12!/8!4!=495\)
possible designs, which is a small enough number to enable evaluation of each design. Of the 495 designs, 36 have a statistical fan which accommodates all four models.

\begin{table}[h]
\begin{center}
\caption{The set of possible leaves for Example~\ref{example:2} with \(n=8\)}
\label{tab8run} 
\begin{tabular}
{c|ccc}
\(\times\) & 1 & \(x^{\vphantom{2}}_2\) & \(x_2^2\)\\[1mm] \hline  \\[-3.3mm]
1 & 1 & \(x^{\vphantom{2}}_2\) & \(x_2^2\)\\[0.6mm]
\(x^{\vphantom{2}}_1\) & \(x^{\vphantom{2}}_1\) & \(x^{\vphantom{2}}_1x^{\vphantom{2}}_2\) & \(x^{\vphantom{2}}_1x_2^2\)\\[0.8mm]
\(x_1^2\) & \(x_1^2\) & \(x_1^2x^{\vphantom{2}}_2\) & -\\[0.8mm]
\(x_1^3\) & - & - & -\\
\end{tabular}~~~~~~~
\begin{tabular}
{c|ccc}
\(\times\) & 1 & \(x^{\vphantom{2}}_2\) & \(x_2^2\)\\[1mm] \hline  \\[-3.3mm]
1 & 1 & \(x^{\vphantom{2}}_2\) & \(x_2^2\)\\[0.6mm]
\(x^{\vphantom{2}}_1\) & \(x^{\vphantom{2}}_1\) & \(x^{\vphantom{2}}_1x^{\vphantom{2}}_2\) & \(x^{\vphantom{2}}_1x_2^2\)\\[0.8mm]  
\(x_1^2\) & \(x_1^2\) & - & -\\[0.8mm]
\(x_1^3\) & \(x_1^3\) & - & -\\
\end{tabular}

\vspace{0.4in}

\begin{tabular}
{c|ccc}
\(\times\) & 1 & \(x^{\vphantom{2}}_2\) & \(x_2^2\)\\[1mm] \hline  \\[-3.3mm]
1 & \(1\) & \(x^{\vphantom{2}}_2\) & \(x_2^2\)\\[0.6mm]
\(x^{\vphantom{2}}_1\) & \(x^{\vphantom{2}}_1\) & \(x^{\vphantom{2}}_1x^{\vphantom{2}}_2\) & -\\[0.8mm]
\(x_1^2\) & \(x_1^2\) & \(x_1^2x^{\vphantom{2}}_2\) & -\\[0.8mm]
\(x_1^3\) & \(x_1^3\) & - & -\\
\end{tabular}~~~~~~~
\begin{tabular}
{c|ccc}
\(\times\) & 1 & \(x^{\vphantom{2}}_2\) & \(x_2^2\)\\[1mm] \hline  \\[-3.3mm]
1 & 1 & \(x^{\vphantom{2}}_2\) & -\\[0.6mm]
\(x^{\vphantom{2}}_1\) & \(x^{\vphantom{2}}_1\) & \(x^{\vphantom{2}}_1x^{\vphantom{2}}_2\) & -\\[0.8mm]
\(x_1^2\) & \(x_1^2\) & \(x_1^2x^{\vphantom{2}}_2\) & -\\[0.8mm]
\(x_1^3\) & \(x_1^3\) & \(x_1^3x^{\vphantom{2}}_2\) & -\\
\end{tabular}
\end{center}
\end{table}

\end{example}

The observation that, for the parameters of Example~\ref{example:2}, a subset of five monomials occurs in every {\it HIS} model with \(n=8\) may be formalised into the following general result.  

\begin{theorem}\label{inc}
A sufficient condition for the monomial \( \alpha = (\alpha^{\vphantom{2}}_{1}, \ldots, \alpha^{\vphantom{2}}_r)\), with \(\alpha^{\vphantom{2}}_i<n^{\vphantom{2}}_i\) for \(i=1, \ldots, r\), to be included in every leaf of the statistical fan of design \(\mathcal{D}\) is \(n \ge G(\alpha)\), where: 
\begin{equation}\label{Geq}
G(\alpha) = \Pi_{i=1}^r n^{\vphantom{2}}_i- \Pi_{i=1}^r (n^{\vphantom{2}}_i-\alpha^{\vphantom{2}}_i)+1.
\end{equation}
\end{theorem}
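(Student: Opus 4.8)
The plan is to argue by contraposition, reducing the statement to a counting argument inside the finite ``box'' of admissible monomials. First I would record that, by Corollary~\ref{xyz}, every monomial $\beta = (\beta_1, \ldots, \beta_r)$ occurring in any {\it HI} model supported by $\mathcal{D}$ satisfies $\beta_i \le n_i - 1$ for each $i$. Hence all monomials of every leaf lie in the set
\[
B = \{ \beta \in \mathbb{Z}_{\ge 0}^r : \beta_i \le n_i - 1, \ i = 1, \ldots, r \},
\]
which contains exactly $\Pi_{i=1}^r n_i$ monomials. Since $\alpha_i < n_i$ for each $i$ by hypothesis, we have $\alpha \in B$.

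Next I would fix an arbitrary leaf $\mathcal{M}$ of the statistical fan and suppose, for contradiction, that $\alpha \notin \mathcal{M}$. Because $\mathcal{M}$ is hierarchical, the contrapositive of Definition~\ref{hi} shows that no monomial $\gamma$ having $\alpha$ as a constituent can belong to $\mathcal{M}$: if such a $\gamma$ were in $\mathcal{M}$, then $\alpha$, being constituent to $\gamma$, would be forced into $\mathcal{M}$. The monomials $\gamma \in B$ with $\gamma_i \ge \alpha_i$ for each $i$ are precisely those with $\alpha_i \le \gamma_i \le n_i - 1$, and there are exactly $\Pi_{i=1}^r (n_i - \alpha_i)$ of them. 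All of these are therefore excluded from $\mathcal{M}$.

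Finally I would count. The leaf $\mathcal{M}$ is saturated, so it consists of exactly $n$ monomials, all lying in $B$; equivalently, $\Pi_{i=1}^r n_i - n$ of the monomials of $B$ are absent from $\mathcal{M}$. Since the $\Pi_{i=1}^r (n_i - \alpha_i)$ monomials identified above are among the absent ones, we obtain
\[
\Pi_{i=1}^r (n_i - \alpha_i) \le \Pi_{i=1}^r n_i - n,
\]
which rearranges to $n \le \Pi_{i=1}^r n_i - \Pi_{i=1}^r (n_i - \alpha_i) = G(\alpha) - 1$, that is, $n < G(\alpha)$. This contradicts the hypothesis $n \ge G(\alpha)$, so in fact $\alpha \in \mathcal{M}$; as $\mathcal{M}$ was an arbitrary leaf, $\alpha$ lies in every leaf of the statistical fan of $\mathcal{D}$.

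I do not anticipate a serious obstacle, since the argument is purely combinatorial once its two ingredients are in place: confinement of all model monomials to $B$ via Corollary~\ref{xyz}, and the upward propagation of exclusion furnished by the hierarchy condition. The one point requiring care is the bookkeeping of the ``upper set'' $\{ \gamma \in B : \gamma_i \ge \alpha_i \ \forall i \}$ of $\alpha$ within $B$ and the verification that its cardinality is exactly $\Pi_{i=1}^r (n_i - \alpha_i)$; it is precisely this count, together with the complementary count $\Pi_{i=1}^r n_i - n$ of absent monomials, that produces the threshold $G(\alpha)$.
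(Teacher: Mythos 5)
Your proof is correct and follows essentially the same route as the paper's: both rest on Corollary~\ref{xyz} to confine all candidate monomials to the box of $\Pi_{i=1}^r n^{\vphantom{2}}_i$ admissible monomials, identify the upper set of $\alpha$ within that box as having cardinality $\Pi_{i=1}^r (n^{\vphantom{2}}_i-\alpha^{\vphantom{2}}_i)$, and apply the hierarchy condition together with a pigeonhole count of the $n$ monomials in a saturated leaf. The only difference is presentational: you argue by contradiction on a fixed leaf, whereas the paper states the same counting argument directly.
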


\begin{proof}
For a given monomial \(\alpha\), let \(V=\{(\beta^{\vphantom{2}}_1,\ldots,\beta^{\vphantom{2}}_r):\alpha^{\vphantom{2}}_i \le \beta^{\vphantom{2}}_i < n^{\vphantom{2}}_i\}\). The set \(V\) has
cardinality \(\Pi_{i=1}^r(n^{\vphantom{2}}_i-\alpha^{\vphantom{2}}_i)\) and comprises exactly those monomials that can occur in a {\it HI} model and which have \(\alpha\) as
constituent monomial. Thus, any leaf of the statistical fan of \(\mathcal{D}\) which contains a member of \(V\) must necessarily contain 
\(\alpha\). By Corollary \ref{xyz}, the number of monomials available for inclusion in a model is \(\Pi_{i=1}^r n^{\vphantom{2}}_i\). If \(n\) exceeds 
\(\Pi_{i=1}^r n^{\vphantom{2}}_i- \Pi_{i=1}^r(n^{\vphantom{2}}_i-\alpha^{\vphantom{2}}_i)\) it follows that at least one member of \(V\) is contained in every leaf of the statistical fan of
\(\mathcal{D}\) and hence that \(\alpha\) itself is in every {\it HIS} model supported by \(\mathcal{D}\). The result follows.
\end{proof}

Given \(n^{\vphantom{2}}_1, \ldots,n^{\vphantom{2}}_r\), Theorem \ref{inc} provides the lowest value of \(n\) that guarantees existence of any selected monomial in
every leaf of the statistical fan. This is a useful tool which aids in design planning when specific terms are thought to be fundamental.  The approach is
demonstrated:

\begin{example}
\label{example:3}
An experiment is planned on four factors represented by indeterminates \(x^{\vphantom{2}}_1,x^{\vphantom{2}}_2,x^{\vphantom{2}}_3, x^{\vphantom{2}}_4\). The first two factors are thought to
have a linear effect on the response whilst the third and fourth factors may have a quadratic effect. Hence we use \(n^{\vphantom{2}}_1=n^{\vphantom{2}}_2=2\) and \(n^{\vphantom{2}}_3=n^{\vphantom{2}}_4=3\). Table~\ref{tab1} lists the monomials that satisfy Corollary \ref{xyz} and so could be present in a leaf of the statistical fan of a design, in increasing order of
\(G(\alpha)\). Thus, any design with \(n\ge 19\) would produce a statistical fan in which each leaf contained all monomials of degree one. Likewise, every leaf in a statistical fan of any design with \(n\ge 25\) would contain the terms: \(
    1,x^{\vphantom{2}}_1,x^{\vphantom{2}}_2,x^{\vphantom{2}}_3,x^{\vphantom{2}}_4, x^{\vphantom{2}}_3x^{\vphantom{2}}_4,x^{\vphantom{2}}_1x^{\vphantom{2}}_3,x^{\vphantom{2}}_1x^{\vphantom{2}}_4,x^{\vphantom{2}}_2x^{\vphantom{2}}_3,x^{\vphantom{2}}_2x^{\vphantom{2}}_4,x_3^2,x_4^2.\)
\end{example}

\begin{table}
\caption{\(G(\alpha)\) values for Example~\ref{example:3}}
{\begin{tabular}{ccccccccccc}%\hline
\(\alpha\) & d\((\alpha)\)&\(G(\alpha)\) &~~~~~~~~~~& \(\alpha\)& d\((\alpha)\) & \(G(\alpha)\) &~~~~~~~~~& \(\alpha\)& d\((\alpha)\) & \(G(\alpha)\)\\ %\hline
\((0,0,0,0)\) &0& - && \((1,1,0,0)\) &2& 28 && \((1,0,1,2)\) & 4 & 33\\
\((0,0,1,0)\) &1& 13 && \(( 0,1,1,1)\) & 3 & 29   &&\((0,1,2,1)\) & 4 & 33  \\
\(( 0,0,0,1)\) &1& 13 &&\((1,0,1,1)\) & 3 & 29 && \(( 1,0,2,1)\) & 4 & 33 \\
\(( 1,0,0,0)\) &1& 19 &&  \(( 0,0,1,2)\) & 3 & 29 &&\(( 0,1,1,2)\) &4& 33 \\
\(( 0,1,0,0)\) &1& 19 && \(( 0,0,2,1)\) & 3 & 29 &&\(( 0,0,2,2)\) & 4 & 33 \\
\(( 0,0,1,1)\) & 2 & 21 && \(( 1,0,2,0)\) & 3 & 31 &&\(( 1,1,2,0)\) & 4 & 34  \\
\(( 1,0,1,0)\) &2& 25 && \(( 1,1,1,0)\) & 3 & 31 &&\(( 1,1,0,2)\) & 4 & 34  \\
\(( 1,0,0,1)\) &2& 25 && \(( 1,1,0,1)\) & 3 & 31 &&\(( 1,1,2,1)\) & 5 & 35  \\
\(( 0,1,1,0)\) &2& 25 && \(( 1,0,0,2)\) & 3 & 31 &&\(( 1,1,1,2)\) & 5 & 35  \\
\(( 0,1,0,1)\) & 2 & 25&& \(( 0,1,2,0)\) & 3 & 31&&\(( 1,0,2,2)\) & 5 & 35  \\
 \(( 0,0,2,0)\) & 2 & 25 && \(( 0,1,0,2)\) & 3 & 31 &&\(( 0,1,2,2)\) & 5 & 35 \\
 \(( 0,0,0,2)\) & 2 & 25  && \(( 1,1,1,1)\) & 4 & 33&&\(( 1,1,2,2)\) & 6 & 36 \\
%\hline
\end{tabular}}
\label{tab1} 
\end{table}

The following corollary to Theorem \ref{inc} gives the smallest number of observations in a design which guarantees inclusion of all monomials of degree one in every leaf of the statistical fan:

\begin{corollary}\label{cormain}
For a design with \(r\) indeterminates each with \(n^{\vphantom{2}}_i=m\), for the monomials of total degree one, equation~\eqref{Geq} becomes \(G({x^{\vphantom{2}}_i})=m^{r-1}+1\).
\end{corollary}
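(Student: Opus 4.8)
The plan is to treat this purely as a substitution into the formula \eqref{Geq} established in Theorem~\ref{inc}. The hypotheses of that theorem are met for a degree-one monomial: since each level count equals $m$ and $m > 1$ (because every $n^{\vphantom{2}}_j > 1$), the relevant exponent $\alpha^{\vphantom{2}}_i = 1$ satisfies $\alpha^{\vphantom{2}}_i < n^{\vphantom{2}}_i = m$, so \eqref{Geq} applies verbatim.

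First I would fix the indeterminate $x^{\vphantom{2}}_i$ and record its exponent tuple: the monomial $x^{\vphantom{2}}_i$ corresponds to $\alpha = (\alpha^{\vphantom{2}}_1, \ldots, \alpha^{\vphantom{2}}_r)$ with $\alpha^{\vphantom{2}}_i = 1$ and $\alpha^{\vphantom{2}}_j = 0$ for all $j \neq i$. With the common value $n^{\vphantom{2}}_j = m$ for every $j$, the first product in \eqref{Geq} is immediate, namely $\Pi_{j=1}^r n^{\vphantom{2}}_j = m^r$.

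Next I would evaluate the second product. Since $n^{\vphantom{2}}_j - \alpha^{\vphantom{2}}_j = m - 1$ in the single slot $j = i$ and $n^{\vphantom{2}}_j - \alpha^{\vphantom{2}}_j = m$ in each of the remaining $r - 1$ slots, we obtain
\begin{equation*}
\Pi_{j=1}^r (n^{\vphantom{2}}_j - \alpha^{\vphantom{2}}_j) = (m - 1)\, m^{r-1}.
\end{equation*}
Substituting both products into \eqref{Geq} and simplifying,
\begin{equation*}
G(x^{\vphantom{2}}_i) = m^r - (m-1)\, m^{r-1} + 1 = m^{r-1} + 1,
\end{equation*}
which is the claimed expression.

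There is essentially no obstacle here, as the result is a one-line specialisation of Theorem~\ref{inc}. The only point requiring minor care is correctly reading off the exponent tuple of a degree-one monomial, so that exactly one factor of the second product drops from $m$ to $m - 1$ while the rest remain equal to $m$. I would also remark that $G(x^{\vphantom{2}}_i)$ is independent of the chosen index $i$, reflecting the symmetry of the equireplicate level structure; this is why the statement can be phrased for a generic degree-one monomial rather than a specific one.
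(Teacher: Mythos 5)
Your proposal is correct and matches the paper's treatment: the paper states this corollary without an explicit proof precisely because it is the direct substitution of the degree-one exponent tuple into equation~\eqref{Geq}, yielding \(m^r - (m-1)m^{r-1} + 1 = m^{r-1}+1\) exactly as you compute. Your additional checks (that \(\alpha^{\vphantom{2}}_i = 1 < m = n^{\vphantom{2}}_i\) so Theorem~\ref{inc} applies, and that the result is independent of \(i\)) are sound and consistent with the paper's intent.
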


\noindent An application of Corollary \ref{cormain} to factorial designs is well known. For a \(2^k\) factorial, all main effects can be estimated from any subset of \(2^{k-1}+1\) runs.

Careful selection of \(n^{\vphantom{2}}_i\) values is sensible to minimise values of \(G(\alpha)\) for \(\alpha\) of particular interest. In the case of
Example~\ref{example:3}, had \(n^{\vphantom{2}}_i=4\) been chosen for all indeterminates then Theorem~\ref{inc} gives \(G(x^{\vphantom{2}}_1)=65\) and \(G(x_3^2)=129\). Hence, a far
larger design is required to guarantee inclusion of specific monomials in every leaf of the statistical fan.

\section{Identification of {\it HIS} Models via Gr\"obner Bases}\label{sgb}

\subsection{Introduction}\label{gint}
 
\citet{PistoneWynn} use the theory of Gr\"obner bases to find a {\it HIS} model for a given design. The model is unique, contingent on the monomial ordering. The set of {\it HIS} models obtained through the Gr\"obner bases approach is termed the {\it algebraic fan} of the design, and is a subset of the statistical fan. For work on the algebraic fan of a design see~\citet{CPRW}, \citet{PRWb} and~\citet{RKR}. 

In this section, we 
provide a brief overview of key concepts relating to Gr\"obner bases and their use in the construction of {\it HIS} models. Any readers wanting a more comprehensive treatment of Gr\"obner bases are referred to~\citet{CLOS} or~\citet{Froeberg}. Notation relating to Gr\"obner bases is consistent with that of~\citet{CLOS}.  Texts that cover the application of
Gr\"obner bases in the design of experiments include~\citet{PRW1},~\citet{Hibi} and ~\citet{Sullivant}. 

\subsection{Monomial Orderings}\label{sgb1}

The {\G} basis approach to ideals in polynomial rings requires the introduction of an ordering on the space of monomials. 

\begin{definition}[\cite{CLOS}]
\label{order}
A~\emph{monomial  ordering}, \(\preceq\), is a relation on $\mathbb{Z}_{\ge 0}^r$, the set of monomials in $r$ indeterminates, that satisfies the conditions:
\begin{enumerate}
\item[(i)] \(\preceq\) is a total ordering on \(\mathbb{Z}_{\ge 0}^r\);  
\item[(ii)] for all \(\alpha, \beta \in \mathbb{Z}_{\ge 0}^r\) such that \(\alpha \preceq \beta\), we have \(\alpha+\gamma \preceq \beta+\gamma\) for all \(\gamma \in \mathbb{Z}_{\ge 0}^r\); 
\item[(iii)] every non-empty subset of \(\mathbb{Z}_{\ge 0}^r\) has a least element under \(\preceq\).
\end{enumerate}
We write $\alpha \prec \beta$ if $\alpha \preceq \beta$ but $\alpha \neq \beta$. We also refer to the relation $\prec$ as a monomial ordering. 
%We write $\beta \succ \alpha$ if $\alpha \prec \beta$.
\end{definition}
It follows from Definition \ref{order} that any two monomials \(\alpha,\beta\ \in \mathbb{Z}_{\ge 0}^r\)   satisfy exactly one of: \(\alpha\prec \beta\); \(\alpha= \beta\); \(\beta\prec \alpha\).  In the special case $r = 1$, the unique monomial ordering on $\mathbb{Z}_{\ge 0}$ is $0 \prec 1 \prec 2 \prec \dots$. Thus,  for a single indeterminate $x$, the corresponding monomial ordering is $1 \prec x \prec x^2 \prec \dots$. For monomials in more than one indeterminate, there is not a unique monomial ordering. 

A polynomial in $r$ indeterminates is a linear combination of a finite number of monomial terms, which we will list in decreasing order with respect to $\prec$. The first term (i.e. the greatest term with respect to $\prec$) is referred to as the leading term ($\LT$) of the polynomial. For example, the polynomial in one indeterminate \(f(x) = 4x^{11}-2x^6+x^4+x-3\) is written with the terms in decreasing monomial order, and has leading term $\LT (f) = 4 x^{11}$. 

\noindent 

For $r \ge 2$, three  monomial orderings have received particular attention in statistical work. Descriptions follow with the monomial 
orderings expressed in terms of monomials \(\alpha=( \alpha^{\vphantom{2}}_1, \ldots, \alpha^{\vphantom{2}}_r),\beta=( \beta^{\vphantom{2}}_1, \ldots, \beta^{\vphantom{2}}_r ) \in \mathbb{Z}_{\ge 0}^r \). In each case the ordering is illustrated by the monomials with \(r=3\) in the set \(\{(1,2,3), (2,0,4),(2,1,3),(2,1,0),(3,0,0)\}\).

\medskip

\noindent{\bf Lexicographic {\it (lex)}:} \(\alpha\prec_{lex} \beta\) if and only if \(\alpha^{\vphantom{2}}_1 < \beta^{\vphantom{2}}_1\) or there is some \(p \le r\) such that \(\alpha^{\vphantom{2}}_i=\beta^{\vphantom{2}}_i\) for \(i=1,\ldots, p-1\) and \(\alpha^{\vphantom{2}}_p<\beta^{\vphantom{2}}_p\).
\begin{eqnarray*}\label{lord}
(1,2,3)\prec_{lex}(2,0,4)\prec_{lex}(2,1,0)\prec_{lex}(2,1,3)\prec_{lex}(3,0,0).
\end{eqnarray*}
\noindent{\bf Graded Lexicographic {\it (grlex)}:} \(\alpha\prec_{grlex} \beta\) if and only if \({\rm d}(\alpha)<{\rm d}(\beta)\) or \({\rm d}(\alpha)={\rm d}(\beta)\) and \(\alpha\prec_{lex} \beta\). 
\begin{eqnarray*}\label{dlord}
(2,1,0)\prec_{grlex}(3,0,0)\prec_{grlex}(1,2,3)\prec_{grlex}(2,0,4)\prec_{grlex}(2,1,3).
\end{eqnarray*}
\noindent{\bf Graded Reverse Lexicographic {\it (grevlex)}:} \(\alpha\prec_{grevlex} \beta\) if and only if \({\rm d}(\alpha)<{\rm d}(\beta)\) or \({\rm d}(\alpha)={\rm d}(\beta)\) and \(\alpha^{\vphantom{2}}_r> \beta^{\vphantom{2}}_r\) or there is some \(p < r\) such that \(\alpha^{\vphantom{2}}_i=\beta^{\vphantom{2}}_i\) for \(i=p+1,\ldots, r\) and \(\alpha^{\vphantom{2}}_p>\beta^{\vphantom{2}}_p\). 
\begin{eqnarray*}\label{rdlord}
(2,1,0)\prec_{grevlex}(3,0,0)\prec_{grevlex}(2,0,4)\prec_{grevlex}(1,2,3)\prec_{grevlex}(2,1,3).
\end{eqnarray*}

\subsection{Gr\"obner basis criterion}\label{s2s3}

Consider the ring of polynomials $\poly$ in indeterminates $x^{\vphantom{2}}_1, \dots, x^{\vphantom{2}}_r$ over the field $\R$. An~\emph{ideal\/} in $\poly$ is a subset $I \subseteq \poly$ that is a ring in its own right, with the additional property that, for all $f \in \poly$ and all $g \in I$, we have $f \! g, g \! f \in I$. By the Hilbert basis theorem, there exists a finite collection of polynomials $f_1, \dots, f_s \in \poly$ that generate the ideal $I$, in the sense that 
\begin{equation}
\label{lara}
I =  
\left\{ \left. \sum_{i=1}^s h_i f_i \, \right| h_i \in \poly \right\} \eqqcolon \la f_1, \dots, f_s \ra. 
\end{equation}
We define an equivalence relation $\sim$ on $\poly$ by stating that $f_1, f_2 \in \poly$ satisfy $f_1 \sim f_2$ if and only if $f_1 - f_2 \in I$. The space of equivalence classes with respect to $\sim$ carries the structure of a ring, referred to as the~\emph{quotient ring\/}, denoted $\poly / I$. 

\medskip
Given a design $\mathcal{D}$, with $n \times r$ design matrix $X_{\mathcal{D}}$, the rows of $X_{\mathcal{D}}$ define $n$ distinct points $p_1, \dots, p_n$ in $\mathbb{R}^r$ given by 
\[
p_i = \left( x_{i1}, x_{i2}, \dots, x_{ir} \right) \mbox{ for } i = 1, \dots, n. 
\]
We consider the ideal of polynomials that vanish at the points $p_i, i = 1, \dots, n$: 
\[
I_{\mathcal{D}} \coloneqq \left\{ f \in \poly \mid f(p_1) = \dots = f(p_n) = 0 \right\} \subseteq \poly. 
\]
For the points $p_1, \dots, p_n$, the quotient ring $\poly / I_{\mathcal{D}}$  is an $n$-dimensional vector space. 

Given the ideal $I_{\mathcal{D}} \subseteq \poly$ and a monomial ordering, the {\G}-basis construction gives a systematic way of constructing a finite set of polynomials, $\{ g_1, \dots, g_t \}$, that generate the ideal $I_{\mathcal{D}}$~\cite[Chapter~2]{CLOS}. 
Extracting the leading terms of these polynomials with respect to the monomial ordering, $\LT(g_1), \dots, \LT(g_t)$, a basis for the quotient ring is given by the $n$ monomials
\begin{equation}
\left\{ x^{\alpha} \in\poly \mid x^{\alpha} \mbox{ not divisible by $\LT(g_i)$ for $i = 1, \dots, t$} \right\}.
\label{quotientbasis}
\end{equation}
(See, for example,~\cite[Chapter~5, \S3]{CLOS}.) The terms of this basis are then a set of monomials that determine a~{\it HIS} model for the design $\mathcal{D}$. 

Obtaining a Gr\"obner basis for an ideal is generally
computationally intensive and mathematical software packages such
as {\it MAPLE} or {\it MATHEMATICA} should be used. Alternatively,
{\it CoCoA}, a special purpose system for computations in
commutative algebra, is available at~\href{http://cocoa.dima.unige.it}{\texttt{cocoa.dima.unige.it}}. 

To summarise, a design $\mathcal{D}$ defines an ideal $I_{\mathcal{D}} \subseteq \poly$. The strength of the {\G} basis approach is that, given a monomial ordering, it explicitly constructs a basis for $I_{\mathcal{D}}$. This directly leads to a basis for the quotient ring $\poly / I_{\mathcal{D}}$, i.e. a {\it HIS} model for the design $\mathcal{D}$. However, once the monomial ordering is fixed, there is no freedom in this construction while, in general, there may be many possible bases for $I_{\mathcal{D}}$ and $\poly / I_{\mathcal{D}}$. In particular, as the following example shows, there may be bases for $\poly / I_{\mathcal{D}}$ (i.e. \textit{HIS} models for the design $\mathcal{D}$) that~\emph{cannot\/} arise by the {\G} basis construction for~\emph{any\/} choice of monomial ordering. 

\begin{example}
\label{ex:byebyeGroebner}
Consider a design \(\mathcal{D}3\) with $n=7$, $r=4$, having design matrix:
\begin{equation}\label{desmat3}
X_{\mathcal{D}3}=\begin{bmatrix} ~1&~1&~1&~1\\
~1&~1&-1&~1\\
~1&-1&~1&~1\\
~1&-1&-1&-1\\
-1&~1&~1&-1\\
-1&~1&-1&-1\\
-1&-1&~1&-1\\
\end{bmatrix}.
\end{equation}
For both the~\textit{grlex} and \textit{grevlex} orderings, Mathematica gives the following {\G} basis for the ideal $I_{\mathcal{D}3}$: 
\begin{subequations}
\begin{gather}
g_1 = x_1^2 - 1,  \qquad g_2 = x_2^2 - 1, \qquad g_3 = x_3^2 - 1, \qquad g_4 = x_4^2 - 1,
\\
g_5 = x^{\vphantom{2}}_1 \, x^{\vphantom{2}}_2 - x^{\vphantom{2}}_2 \, x^{\vphantom{2}}_4 + x^{\vphantom{2}}_1 - x^{\vphantom{2}}_4, 
\qquad 
g_6 = x^{\vphantom{2}}_1 \, x^{\vphantom{2}}_3 - x^{\vphantom{2}}_3 \, x^{\vphantom{2}}_4 + x^{\vphantom{2}}_1 - x^{\vphantom{2}}_4, 
\\
g_7 = x^{\vphantom{2}}_1 \, x^{\vphantom{2}}_4 + x^{\vphantom{2}}_1 - x^{\vphantom{2}}_4 - 1, 
\qquad 
g_8 = x^{\vphantom{2}}_2 \, x^{\vphantom{2}}_3 - 2 x^{\vphantom{2}}_1 - x^{\vphantom{2}}_2 - x^{\vphantom{2}}_3 + 2 x^{\vphantom{2}}_4 + 1. 
\end{gather}\label{GBlex}\end{subequations}
For both orderings, this gives leading terms 
\begin{gather*}
\LT(g_1) = x_1^2, \quad \LT(g_2) = x_2^2, \quad \LT(g_3) = x_3^2, \quad \LT(g_4) = x_4^2, 
\\
\LT(g_5) = x^{\vphantom{2}}_1 \, x^{\vphantom{2}}_2, \quad \LT(g_6) = x^{\vphantom{2}}_1 \, x^{\vphantom{2}}_3, \qquad \LT(g_7) = x^{\vphantom{2}}_1 \, x^{\vphantom{2}}_4, \quad \LT(g_8) = x^{\vphantom{2}}_2 \, x^{\vphantom{2}}_3. 
\end{gather*}
A basis for the quotient ring is given by the collection of monomials that are not divisible by these leading terms, i.e. the set~\eqref{quotientbasis}. This gives the set of monomials 
\[
\{ 1, \, x^{\vphantom{2}}_1, \, x^{\vphantom{2}}_2, \, x^{\vphantom{2}}_3, \, x^{\vphantom{2}}_4, \, x^{\vphantom{2}}_2 \, x^{\vphantom{2}}_4, \, x^{\vphantom{2}}_3 \, x^{\vphantom{2}}_4 \}
\]
which form a basis for $\poly / I_{\mathcal{D}3}$ and, hence, a {\it HIS} model for the design $\mathcal{D}3$. 

One may check that this is precisely the \textit{HIS} model that would be obtained from the approach of Theorem~\ref{hier} by considering the elements of $T_q$ in the relevant monomial order.  In particular, starting from $1$ and adding terms in increasing \textit{grlex}/\textit{grevlex} order, omitting terms for which the monomial vectors are linearly dependent on those of monomials already included (corresponding to polynomials in the ideal $I_{\mathcal{D}3}$) gives the terms 
\[
1 \prec x^{\vphantom{2}}_4 \prec x^{\vphantom{2}}_3 \prec x^{\vphantom{2}}_2 \prec x^{\vphantom{2}}_1 \prec x^{\vphantom{2}}_3 \, x^{\vphantom{2}}_4 \prec x^{\vphantom{2}}_2 \, x^{\vphantom{2}}_4. 
\]

For the~\textit{lex} ordering, the {\G} basis for $I_{\mathcal{D}3}$ is:
\begin{gather*}
x_2^2 - 1, 
\qquad
x_3^2 - 1, 
\qquad
x_4^2 - 1, 
\\
x^{\vphantom{2}}_2 \, x^{\vphantom{2}}_3 \, x^{\vphantom{2}}_4 + x^{\vphantom{2}}_2 \, x^{\vphantom{2}}_3 - x^{\vphantom{2}}_2 \, x^{\vphantom{2}}_4 - x^{\vphantom{2}}_3 \, x^{\vphantom{2}}_4 - x^{\vphantom{2}}_2 - x^{\vphantom{2}}_3 + x^{\vphantom{2}}_4 + 1, 
\\
x^{\vphantom{2}}_1 - \frac{1}{2} \, x^{\vphantom{2}}_2 \, x^{\vphantom{2}}_3 + \frac{1}{2} \, x^{\vphantom{2}}_2 + \frac{1}{2} \, x^{\vphantom{2}}_3 - x^{\vphantom{2}}_4 - \frac{1}{2}, 
\end{gather*}
with leading terms
\[
x_2^2, \quad 
x_3^2, \quad 
x_4^2, \quad 
x^{\vphantom{2}}_2 \, x^{\vphantom{2}}_3 \, x^{\vphantom{2}}_4, \quad 
x^{\vphantom{2}}_1, 
\]
respectively. A basis for the quotient ring $\poly / I_{\mathcal{D}3}$ and, hence, a {\it HIS} model for the design $\mathcal{D}3$, is given by the set of monomials 
\[
\{ 1, \, x^{\vphantom{2}}_2, \, x^{\vphantom{2}}_3, \, x^{\vphantom{2}}_4, \, x^{\vphantom{2}}_2 \, x^{\vphantom{2}}_3, \, x^{\vphantom{2}}_2 \, x^{\vphantom{2}}_4, \, x^{\vphantom{2}}_3 \, x^{\vphantom{2}}_4 \}. 
\]
This is exactly the \textit{HIS} model that we obtain by an approach similar to that of Theorem~\ref{hier} adding terms in increasing lexicographic order, discarding terms that are linearly dependent on those already included, i.e. 
\[
1 \prec x^{\vphantom{2}}_4 \prec x^{\vphantom{2}}_3 \prec x^{\vphantom{2}}_3 \, x^{\vphantom{2}}_4 \prec x^{\vphantom{2}}_2 \prec x^{\vphantom{2}}_2 \, x^{\vphantom{2}}_4 \prec x^{\vphantom{2}}_2 \, x^{\vphantom{2}}_3. 
\]

In general, for any monomial order, the \textit{HIS} model obtained by the {\G} basis approach can be more easily obtained as follows: Starting with $1$, consider the monomial vectors corresponding to monomials listed in increasing monomial order. A monomial is added in the model if its monomial vector is linearly independent of the monomial vectors already included. Otherwise, it is discarded. In this way, the model is built up term by term. It follows from the {\G} basis construction itself that this construction yields a \textit{HIS} model. 

For the design $\mathcal{D}3$, there are \textit{HIS} models that cannot be obtained from the {\G} basis approach. Such a \textit{HIS} model for $\mathcal{D}3$, that follows from Theorem~\ref{hier}, is the one with monomial set 
\begin{equation}
\left\{ 1, \ x^{\vphantom{2}}_1, \ x^{\vphantom{2}}_2, \ x^{\vphantom{2}}_3, \ x^{\vphantom{2}}_4, \ x^{\vphantom{2}}_1 \, x^{\vphantom{2}}_2, \ x^{\vphantom{2}}_3 \, x^{\vphantom{2}}_4 \right\}. 
\label{monoms}
\end{equation}

\begin{theorem}
The \textit{HIS} model with monomial set~\eqref{monoms} does not arise from the {\G} basis construction for any monomial ordering.
\end{theorem}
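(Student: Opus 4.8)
The plan is to argue by contradiction. Suppose the model with monomial set~\eqref{monoms}, call it $B = \{1, x_1, x_2, x_3, x_4, x_1 x_2, x_3 x_4\}$, did arise from the {\G} basis construction for some monomial ordering $\prec$. As explained immediately above the statement, this is equivalent to saying that the greedy procedure---scanning the monomials of $\poly$ in increasing $\prec$-order and retaining a monomial exactly when its monomial vector is linearly independent of those already retained---returns precisely $B$. First I would record this reduction, and observe that since $B$ is a basis of $\mathbb{R}^7$ (the model is \textit{HIS}), the vector of any monomial has a \emph{unique} expansion in the vectors of $B$.

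The mechanism I would exploit is the following. Because $x_1 x_3 \notin B$ and $x_2 x_4 \notin B$, each of these is scanned at some stage and then \emph{discarded}; at that moment its monomial vector lies in the span of the vectors already retained, all of which are $\prec$-smaller. Since those retained monomials form a subset of the basis $B$ and the $B$-expansion is unique, every element of $B$ occurring with nonzero coefficient in the expansion of the discarded monomial must precede it under $\prec$. So the next step is to compute the $B$-expansions of $\mathbf{x}_1\mathbf{x}_3$ and $\mathbf{x}_2\mathbf{x}_4$ by evaluating over the seven rows of $X_{\mathcal{D}3}$.

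The decisive computations are just two, each verified directly on the seven design points: $\mathbf{x}_1 \mathbf{x}_3 = -\,\mathbf{x}_1 + \mathbf{x}_4 + \mathbf{x}_3\mathbf{x}_4$ and $\mathbf{x}_2 \mathbf{x}_4 = \mathbf{x}_1 - \mathbf{x}_4 + \mathbf{x}_1 \mathbf{x}_2$. The first identity exhibits the standard monomial $x_3 x_4 \in B$ in the expansion of $x_1 x_3$, forcing $x_3 x_4 \prec x_1 x_3$; cancelling the common factor $x_3$ by the translation-invariance in part~(ii) of Definition~\ref{order}, this is equivalent to $x_4 \prec x_1$. The second identity exhibits $x_1 x_2 \in B$ in the expansion of $x_2 x_4$, forcing $x_1 x_2 \prec x_2 x_4$; cancelling $x_2$ gives $x_1 \prec x_4$. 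These two requirements are mutually contradictory, so no ordering $\prec$ can realise $B$, which is the claim.

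The routine part is the linear algebra: each identity is the unique solution of a $7 \times 7$ system and is confirmed by direct evaluation. The part that requires care---and the real crux---is locating the \emph{right} pair of monomials outside $B$: one needs two non-standard monomials whose $B$-expansions involve the two genuine degree-two members $x_1 x_2$ and $x_3 x_4$ of $B$ in a ``crossed'' manner, so that after cancellation they impose \emph{opposite} strict inequalities on the \emph{same} pair $x_1, x_4$. The main obstacle is therefore conceptual rather than computational: recognising that the entire obstruction can be distilled into the induced comparison of $x_1$ and $x_4$, and that axiom~(ii) permits both degree-two comparisons to be reduced to this single contradictory pair.
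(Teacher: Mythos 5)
Your proposal is correct and is essentially the paper's own argument: you exploit the same two vanishing combinations on $\mathcal{D}3$ (the paper's $g_5$ and $g_6$) to force the same two ordering constraints, $x_1 x_2 \prec x_2 x_4$ and $x_3 x_4 \prec x_1 x_3$, which are precisely the paper's conditions~\eqref{monord4} and~\eqref{monord1}. The only differences are presentational: you derive these constraints via the greedy term-by-term characterization stated in the paper just before the theorem (rather than from the leading-term-ideal property of a {\G} basis), and you reach the contradiction by cancelling common factors to get $x_4 \prec x_1 \prec x_4$, whereas the paper multiplies through by $x_3$ and $x_2$ to get $x_1 x_2 x_3 \prec x_2 x_3 x_4 \prec x_1 x_2 x_3$ --- both are valid uses of the ordering axioms.
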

\begin{proof}
The proof requires some background from the theory of {\G} bases, which can be found in~\citet[Chapter~2]{CLOS}. 

\medskip

By assumption, there exists a monomial ordering $\prec$,  and corresponding {\G} basis with the property that the monomials in~\eqref{monoms} are exactly the set of monomials that are not divisible by the leading monomials (with respect to $\prec$) of the elements of the {\G} basis. By inspection, the only way that this can be achieved is if the leading terms of the elements of the {\G} basis are
\[
x_1^2, \,  x_2^2, \, x_3^2, \, x_4^2, \, x^{\vphantom{2}}_1 x^{\vphantom{2}}_3, \, x^{\vphantom{2}}_1 x^{\vphantom{2}}_4, \, x^{\vphantom{2}}_2 x^{\vphantom{2}}_3, \, x^{\vphantom{2}}_2 x^{\vphantom{2}}_4. 
\]
Therefore, there must exist a {\G} basis $\{ h_1, \dots, h_8 \}$ for the ideal $I_{\mathcal{D}3}$ with leading terms
\begin{gather*}
\LT(h_1) = x_1^2, \quad \LT(h_2) = x_2^2, \quad \LT(h_3) = x_3^2, \quad \LT(h_4) = x_4^2, 
\\
\LT(h_5) = x^{\vphantom{2}}_1 \, x^{\vphantom{2}}_3, \quad \LT(h_6) = x^{\vphantom{2}}_1 \, x^{\vphantom{2}}_4, \qquad \LT(h_7) = x^{\vphantom{2}}_2 \, x^{\vphantom{2}}_3, \quad \LT(h_8) = x^{\vphantom{2}}_2 \, x^{\vphantom{2}}_4. 
\end{gather*}
The defining property of a {\G} basis is that 
\begin{equation}
\la \LT \left( I_{\mathcal{D}3} \right) \ra = \la \LT(h_1), \dots, \LT(h_8) \ra = \la x_1^2,  x_2^2, x_3^2, x_4^2, x^{\vphantom{2}}_1 x^{\vphantom{2}}_3, x^{\vphantom{2}}_1 x^{\vphantom{2}}_4, x^{\vphantom{2}}_2 x^{\vphantom{2}}_3, x^{\vphantom{2}}_2 x^{\vphantom{2}}_4 \ra, 
\label{GBcondition}
\end{equation}
where $\la \LT(h_1), \dots, \LT(h_8) \ra$ denotes the ideal in $\poly$ generated by the monomials $\LT(h_1), \dots, \LT(h_8)$ (cf.~equation~\eqref{lara}), and $\la \LT \left( I_{\mathcal{D}3} \right) \ra$ denotes the ideal generated by the leading terms of non-zero elements of $I_{\mathcal{D}3}$. 

Consider $g_5 \coloneqq x^{\vphantom{2}}_1 \, x^{\vphantom{2}}_2 - x^{\vphantom{2}}_2 \, x^{\vphantom{2}}_4 + x^{\vphantom{2}}_1 - x^{\vphantom{2}}_4 \in I_{\mathcal{D}3}$, i.e. $g_5$ from~\eqref{GBlex}. Since $1 \prec x^{\alpha}$, for all $\alpha \neq 0$, for any monomial ordering $\prec$, it follows that $x^{\vphantom{2}}_1 \prec x^{\vphantom{2}}_1 x^{\vphantom{2}}_2$ and $x^{\vphantom{2}}_4 \prec x^{\vphantom{2}}_2 \, x^{\vphantom{2}}_4$. Therefore 
\[
\LT(g_5) = \LT \left( x^{\vphantom{2}}_1 \, x^{\vphantom{2}}_2 - x^{\vphantom{2}}_2 \, x^{\vphantom{2}}_4 \right). 
\]
If this were $x^{\vphantom{2}}_1 \, x^{\vphantom{2}}_2$, then $x^{\vphantom{2}}_1 \, x^{\vphantom{2}}_2$ would be an element of the left-hand-side of~\eqref{GBcondition}. However, $x^{\vphantom{2}}_1 \, x^{\vphantom{2}}_2$ is not an element of the right-hand-side of~\eqref{GBcondition}, so this cannot be the case. Therefore, $\LT(g_5) = - x^{\vphantom{2}}_2 \, x^{\vphantom{2}}_4$, so the monomial ordering must satisfy%~\eqref{monord4}. 
\begin{equation}
x^{\vphantom{2}}_1 \, x^{\vphantom{2}}_2 \prec x^{\vphantom{2}}_2 \, x^{\vphantom{2}}_4. 
\label{monord4}
\end{equation}
Similarly, let $g_6 \coloneqq x^{\vphantom{2}}_1 \, x^{\vphantom{2}}_3 - x^{\vphantom{2}}_3 \, x^{\vphantom{2}}_4 + x^{\vphantom{2}}_1 - x^{\vphantom{2}}_4 \in I_{\mathcal{D}3}$, i.e. $g_6$ from~\eqref{GBlex}. We then have 
\[
\LT(g_6) = \LT \left( x^{\vphantom{2}}_1 \, x^{\vphantom{2}}_3 - x^{\vphantom{2}}_3 \, x^{\vphantom{2}}_4 \right). 
\]
Since $- x^{\vphantom{2}}_3 \, x^{\vphantom{2}}_4$ is not an element of the right-hand-side of~\eqref{GBcondition}, we must have $\LT(g_6) = x^{\vphantom{2}}_1 \, x^{\vphantom{2}}_3$, which implies that the monomial ordering satisfies
\begin{equation}
x^{\vphantom{2}}_3 \, x^{\vphantom{2}}_4 \prec x^{\vphantom{2}}_1 \, x^{\vphantom{2}}_3. 
\label{monord1}
\end{equation}

However, by the monomial ordering property, we then have 
\[
x^{\vphantom{2}}_1 \, x^{\vphantom{2}}_2 \, x^{\vphantom{2}}_3 \underset{\eqref{monord4}}{\prec} x^{\vphantom{2}}_2 \, x^{\vphantom{2}}_3 \, x^{\vphantom{2}}_4 \underset{\eqref{monord1}}{\prec} x^{\vphantom{2}}_1 \, x^{\vphantom{2}}_2 \, x^{\vphantom{2}}_3, 
\]
i.e. $x^{\vphantom{2}}_1 \, x^{\vphantom{2}}_2 \, x^{\vphantom{2}}_3 \prec x^{\vphantom{2}}_1 \, x^{\vphantom{2}}_2 \, x^{\vphantom{2}}_3$, which is a contradiction. Therefore, there is no monomial ordering $\prec$ that leads to the desired model. 
\end{proof}

The proof can be understood in terms of the approach of Section~\ref{HIS}, i.e. by constructing the model term by term. From the basis for $I_{\mathcal{D}3}$ given in~\eqref{GBlex}, we deduce that the combinations of monomial vectors $\mathbf{x}^{\vphantom{2}}_1 \, \mathbf{x}^{\vphantom{2}}_2 - \mathbf{x}^{\vphantom{2}}_2 \, \mathbf{x}^{\vphantom{2}}_4 + \mathbf{x}^{\vphantom{2}}_1 - \mathbf{x}^{\vphantom{2}}_4$ and $\mathbf{x}^{\vphantom{2}}_1 \, \mathbf{x}^{\vphantom{2}}_3 - \mathbf{x}^{\vphantom{2}}_3 \, \mathbf{x}^{\vphantom{2}}_4 + \mathbf{x}^{\vphantom{2}}_1 - \mathbf{x}^{\vphantom{2}}_4$ are identically zero for the design $\mathcal{D}3$. (This can be checked directly from the design matrix.) The first statement implies that the collection of column vectors $\left\{ \mathbf{x}^{\vphantom{2}}_1, \mathbf{x}^{\vphantom{2}}_4, \mathbf{x}^{\vphantom{2}}_1 \, \mathbf{x}^{\vphantom{2}}_2, \mathbf{x}^{\vphantom{2}}_2 \, \mathbf{x}^{\vphantom{2}}_4 \right\}$ is linearly dependent. Therefore, if we wish to include the monomials $x_1, x_4$ and $x_1 \, x_2$ in our model, and we are adding terms in increasing monomial order, then $x_1, x_4$ and $x_1 \, x_2$ must preceed $x_2 \, x_4$. The statement that $x_1 \, x_2$ preceeds $x_2 \, x_4$ is precisely the condition~\eqref{monord4} on $\prec$. Similarly, the vectors $\left\{  \mathbf{x}^{\vphantom{2}}_1, \mathbf{x}^{\vphantom{2}}_4, \mathbf{x}^{\vphantom{2}}_1 \, \mathbf{x}^{\vphantom{2}}_3, \mathbf{x}^{\vphantom{2}}_3 \, \mathbf{x}^{\vphantom{2}}_4 \right\}$ are linearly dependent. Since we require $x_1$, $x_4$ and $x_3 \, x_4$ in our model, the monomial ordering $\prec$ must satisfy the condition~\eqref{monord1}. This leads to the same contradiction as above. 

To summarise, the constraints imposed by any monomial ordering prohibit the inclusion of both $x_1 \, x_2$ and $x_3 \, x_4$ in a model for the design $\mathcal{D}3$. As such, \textit{HIS} models for $\mathcal{D}3$ with monomials as in~\eqref{monoms} can easily be generated by our approach, but~\emph{not\/} by the {\G} basis approach. 
\end{example}

\begin{example}[continues=thm:key2]
\label{example:4cont}
In~\citet{MAW}, the authors state that the algebraic fan for  design \(\mathcal{D}2\) 
comprises 54 {\it HIS} models, arranged in three classes. A representative from each class is obtained by the {\G} basis approach from the standard monomial orderings as follows: 

\medskip

\noindent{\bf Class~1:}  {\it grlex} and {\it grevlex} both yield the model with terms: $1,x^{\vphantom{2}}_1,x^{\vphantom{2}}_2,x^{\vphantom{2}}_3,x^{\vphantom{2}}_4, x_3^2, x^{\vphantom{2}}_3x^{\vphantom{2}}_4, x_4^2, x^{\vphantom{2}}_2x^{\vphantom{2}}_4$. 

\medskip
\noindent{\bf Class 2:} {\it grlex} and {\it grevlex} on the indeterminate subset \(x^{\vphantom{2}}_2,x^{\vphantom{2}}_3,x^{\vphantom{2}}_4\) both yield the model with terms: 
$1, x^{\vphantom{2}}_2,x^{\vphantom{2}}_3,x^{\vphantom{2}}_4,x^{\vphantom{2}}_2x^{\vphantom{2}}_4,x^{\vphantom{2}}_3x^{\vphantom{2}}_4,x_3^2,x_4^2,x^{\vphantom{2}}_3x_4^2$.

\medskip
\noindent{\bf Class 3:} {\it lex} yields the model with terms: $1, x^{\vphantom{2}}_3,x^{\vphantom{2}}_4,x_4^2,x^{\vphantom{2}}_3x^{\vphantom{2}}_4,x^{\vphantom{2}}_3x_4^2,x_3^2,x_3^2x^{\vphantom{2}}_4,x_3^2x_4^2$. 

\smallskip

By considering the $4!$ permutations of the indeterminates $x_1, \dots, x_4$, and the corresponding monomial orderings, \citet{MAW} obtain a further $23$ models in each of Classes 1 and 2 and a further $5$ models in Class 3. These have the terms listed above with the subscripts permuted. All $54$ models in the algebraic fan can be obtained via 
the approach of Theorem \ref{hier} and Corollary \ref{newc}. To illustrate this, 
the Class 1 model given above is obtained by considering vectors in \(T_1\) in any order and then vectors in   \( T_2\) in the {\it 
grevlex} order of the corresponding monomials, i.e. in the order
\begin{equation*}
\mathbf{x}_4^2,\mathbf{x}^{\vphantom{2}}_3 \mathbf{x}^{\vphantom{2}}_4, \mathbf{x}^{\vphantom{2}}_2 \mathbf{x}^{\vphantom{2}}_4, \mathbf{x}^{\vphantom{2}}_1\mathbf{x}^{\vphantom{2}}_4, \mathbf{x}_3^2, \mathbf{x}^{\vphantom{2}}_2 \mathbf{x}^{\vphantom{2}}_3, \mathbf{x}^{\vphantom{2}}_1 \mathbf{x}^{\vphantom{2}}_3, \mathbf{x}_2^2, \mathbf{x}^{\vphantom{2}}_1 \mathbf{x}^{\vphantom{2}}_2, \mathbf{x}_1^2. 
\end{equation*}
By comparison, $251$ {\it HIS} models were found by the computationally simpler methods of Theorem \ref{hier} and Corollary \ref{newc}.
\end{example}

\subsection{Comparison between methods}

The model building approach of  \S\ref{HIS} and \S\ref{s3} and the {\G}  basis approach of \S\ref{sgb}, when used with a graded 
monomial ordering, share the common feature of prioritising monomials with smaller total degree for model inclusion. This seems intuitively desirable. 

The approach of \S\ref{HIS} and \S\ref{s3}, specified through Theorem \ref{hier} and Corollary \ref{newc}, has three key advantages. First, it allows for user input, enabling prior knowledge of the process associated with the experiment to guide the order in which terms are considered for model inclusion at each level. Second, the methods require only elementary linear algebra. Third, as demonstrated by Example \ref{example:4cont}, the models identified using the {\G} basis approach with a graded monomial ordering are a subset of those found using Theorem \ref{hier}. Thus, the methods of \S\ref{HIS} and \S\ref{s3} are both flexible and accessible.  Further work could explore whether this approach can be adapted to include terms hierarchically, but not necessarily in a monotonic manner with respect to monomial degree.

The appeal of the {\G} basis approach is that, given a monomial ordering, a  unique {\it HIS} model is obtained, 
requiring no practitioner input. However, a small amount of practitioner input is possible. For instance a 
reordering of the indeterminates can prioritise the inclusion of desired terms. The 
computationally intensive nature of the construction of a {\G} basis does mean that the method is reliant on the use of packages that can conduct computations in commutative algebra. More fundamentally, the order in which monomials of the same degree are considered for model inclusion is constrained by the monomial ordering used. This is a direct consequence of the {\G} basis approach. It is challenging to imagine a situation in which this constraint would be relevant to a real-world problem. 

%\newpage

\providecommand{\bysame}{\leavevmode\hbox to3em{\hrulefill}\thinspace}


\begin{thebibliography}{10}
\bibitem{aoki2012design}
Satoshi Aoki and Akimichi Takemura, \emph{Design and analysis of fractional
  factorial experiments from the viewpoint of computational algebraic
  statistics}, J. Stat. Theory Pract. \textbf{6} (2012), no.~1, 147--161.

\bibitem{BoxBehnken}
G.~E.~P. Box and D.~W. Behnken, \emph{Some new three level designs for the
  study of quantitative variables}, Technometrics \textbf{2} (1960), 455--475.

\bibitem{BoxWilson}
G.~E.~P. Box and K.~B. Wilson, \emph{On the experimental attainment of optimum
  conditions}, J. Roy. Statist. Soc. Ser. B \textbf{13} (1951), 1--38;
  discussion: 38--45. 

\bibitem{Buchberger}
Bruno Buchberger, \emph{Ein algorithmus zum auffinden der basiselemente des
  restklassen-rings nach einem nulldimensionalen polynomideal}, Ph.D. thesis,
  Mathematical Institute, University of Innsbruck, 1965.

\bibitem{CPRW}
Massimo Caboara, Giovanni Pistone, Eva Riccomagno, and Henry~P. Wynn, \emph{The
  fan of an experimental design},  (1999), Eurandom report 99-038.

\bibitem{CLOS}
David~A. Cox, John Little, and Donal O'Shea, \emph{Ideals, varieties, and
  algorithms}, fourth ed., Undergraduate Texts in Mathematics, Springer, Cham,
  2015, An introduction to computational algebraic geometry and commutative
  algebra. 

\bibitem{DiaconisSturmfels}
Persi Diaconis and Bernd Sturmfels, \emph{Algebraic algorithms for sampling
  from conditional distributions}, Ann. Statist. \textbf{26} (1998), no.~1,
  363--397. 

\bibitem{evangelaras2006comparison}
H.~Evangelaras and C.~Koukouvinos, \emph{A comparison between the {G}r\"{o}bner
  bases approach and hidden projection properties in factorial designs},
  Comput. Statist. Data Anal. \textbf{50} (2006), no.~1, 77--88. 

\bibitem{fontana2014characterization}
Roberto Fontana, Fabio Rapallo, and Maria~Piera Rogantin, \emph{A
  characterization of saturated designs for factorial experiments}, Journal of
  Statistical Planning and Inference \textbf{147} (2014), 204--211.

\bibitem{Froeberg}
Ralf Fr\"{o}berg, \emph{An introduction to {G}r\"{o}bner bases}, Pure and
  Applied Mathematics (New York), John Wiley \& Sons, Ltd., Chichester, 1997.

\bibitem{galetto2003confounding}
Fausto Galetto, Giovanni Pistone, and Maria~Piera Rogantin, \emph{Confounding
  revisited with commutative computational algebra}, J. Statist. Plann.
  Inference \textbf{117} (2003), no.~2, 345--363. 

\bibitem{giglio2001grobner}
Beatrice Giglio, Henry~P. Wynn, and Eva Riccomagno, \emph{Gr\"{o}bner basis
  methods in mixture experiments and generalisations}, Optimum design 2000
  ({C}ardiff), Nonconvex Optim. Appl., vol.~51, Kluwer Acad. Publ., Dordrecht,
  2001, pp.~33--44. 

\bibitem{Harville}
David~A. Harville, \emph{Matrix algebra from a statistician's perspective},
  Springer-Verlag, New York, 1997. 

\bibitem{Hibi}
Takayuki Hibi (ed.), \emph{Gr\"{o}bner bases}, japanese ed., Springer, Tokyo,
  2013, Statistics and software systems. 

\bibitem{JonesNachtsheim}
Bradley Jones and Christopher~J. Nachtsheim, \emph{A class of three-level
  designs for definitive screening in the presence of second-order effects},
  Journal of Quality Technology \textbf{43} (2011), no.~1, 1--15.

\bibitem{Jones}
Charles~H. Jones, \emph{Generalized hockey stick identities and
  {$N$}-dimensional blockwalking}, Fibonacci Quart. \textbf{34} (1996), no.~3,
  280--288. 

\bibitem{maruri2007description}
Hugo Maruri-Aguilar, Roberto Notari, and Eva Riccomagno, \emph{On the
  description and identifiability analysis of experiments with mixtures},
  Statistica Sinica (2007), 1417--1440.

\bibitem{MAW}
Hugo Maruri-Aguilar and Henry~P. Wynn, \emph{Algebraic method in experimental
  design}, Handbook of design and analysis of experiments, Chapman \& Hall/CRC
  Handb. Mod. Stat. Methods, CRC Press, Boca Raton, FL, 2015, pp.~415--454.

\bibitem{myers2016response}
Raymond~H. Myers, Douglas~C. Montgomery, and Christine~M. Anderson-Cook,
  \emph{Response surface methodology}, fourth ed., Wiley Series in Probability
  and Statistics, John Wiley \& Sons, Inc., Hoboken, NJ, 2016, Process and
  product optimization using designed experiments. 

\bibitem{pistone2009methods}
G.~Pistone, E.~Riccomagno, and M.~P. Rogantin, \emph{Methods in algebraic
  statistics for the design of experiments}, Optimal design and related areas
  in optimization and statistics, Springer Optim. Appl., vol.~28, Springer, New
  York, 2009, pp.~97--132. 

\bibitem{PRWb}
Giovanni Pistone, Eva Riccomagno, and Henry~P. Wynn, \emph{{G}r\"{o}bner basis
  methods for structuring and analysing complex industrial experiments},
  International Journal of Reliability, Quality and Safety Engineering
  \textbf{07} (2000), no.~04, 285--300.

\bibitem{PRW1}
\bysame, \emph{Algebraic statistics}, Monographs on Statistics and Applied
  Probability, vol.~89, Chapman \& Hall/CRC, Boca Raton, FL, 2001,
  Computational commutative algebra in statistics. 

\bibitem{PistoneWynn}
Giovanni Pistone and Henry~P. Wynn, \emph{Generalised confounding with
  {G}r\"{o}bner bases}, Biometrika \textbf{83} (1996), no.~3, 653--666.

\bibitem{pukelsheim2006optimal}
Friedrich Pukelsheim, \emph{Optimal design of experiments}, SIAM, 2006.

\bibitem{riccomagno2009short}
Eva Riccomagno, \emph{A short history of algebraic statistics}, Metrika
  \textbf{69} (2009), no.~2-3, 397--418. 

\bibitem{RKR}
Nikolaus Rudak, Sonja Kuhnt, and Eva Riccomagno, \emph{Numerical algebraic fan
  of a design for statistical model building}, Statist. Sinica \textbf{26}
  (2016), no.~3, 1021--1035. 

\bibitem{Sullivant}
Seth Sullivant, \emph{Algebraic statistics}, Graduate Studies in Mathematics,
  vol. 194, American Mathematical Society, Providence, RI, 2018. 

\bibitem{wu2021experiments}
C.~F.~Jeff Wu and Michael~S. Hamada, \emph{Experiments---planning, analysis,
  and optimization}, Wiley Series in Probability and Statistics, John Wiley \&
  Sons, Inc., Hoboken, NJ, 2021, Third edition [of 1780411]. 

\bibitem{Wynn}
H.~P. Wynn, \emph{Algebraic solutions to the connectivity problem for {$m$}-way
  layouts: interaction-contrast aliasing}, J. Statist. Plann. Inference
  \textbf{138} (2008), no.~1, 259--271. 
\end{thebibliography}
\end{document}